\documentclass[reqno, 11pt, a4paper]{amsart}

%
%
\usepackage{latexsym,ifthen,xspace}
\usepackage{amsmath,amssymb,amsthm}
\usepackage{enumerate, calc}
\usepackage[colorlinks=true, pdfstartview=FitV, linkcolor=blue,
  citecolor=blue, urlcolor=blue, pagebackref=false]{hyperref}
\usepackage[text={33pc,605pt},centering]{geometry}    
\usepackage{graphicx}
%
%

%
%
\linespread{1.1}

%
%
\newtheorem{theorem}{Theorem}[section]
\newtheorem{lemma}[theorem]{Lemma}
\newtheorem{prop}[theorem]{Proposition}

\theoremstyle{definition}
\newtheorem{definition}[theorem]{Definition}

\theoremstyle{remark}
\newtheorem{remark}[theorem]{Remark}

\numberwithin{equation}{section}

%
%
\DeclareMathAlphabet{\mathsl}{OT1}{cmss}{m}{sl}
\SetMathAlphabet{\mathsl}{bold}{OT1}{cmss}{bx}{sl}

%
%

\newcommand{\om}{\ensuremath{\omega}}

%
%
\newcommand{\cA}{\ensuremath{\mathcal A}}
\newcommand{\cB}{\ensuremath{\mathcal B}}

\newcommand{\cG}{\ensuremath{\mathcal G}}

\newcommand{\cX}{\ensuremath{\mathcal X}}

%
%

\newcommand{\bbE}{\ensuremath{\mathbb E}}

\newcommand{\bbP}{\ensuremath{\mathbb P}} 
 
\newcommand{\bbR}{\ensuremath{\mathbb R}}

\newcommand{\bbZ}{\ensuremath{\mathbb Z}} 
%
%

%
%

%
%

%
%


\DeclareMathOperator{\prob}{\mathbb{P}}

\DeclareMathOperator{\supp}{\mathrm{supp}}

\newcommand{\ldef}{\ensuremath{\mathrel{\mathop:}=}}

\def\indicator{{\mathchoice {1\mskip-4mu\mathrm l}%
{1\mskip-4mu\mathrm l}{1\mskip-4.5mu\mathrm l}%
{1\mskip-5mu\mathrm l}}}
\def\TH(#1){\label{#1}}\def\thv(#1){\ref{#1}}
\def\Eq(#1){\label{#1}}\def\eqv(#1){(\ref{#1})}

\begin{document}

\title[Diffusion processes on BBM]{Diffusion processes on branching Brownian motion}


%
\author{Sebastian Andres}
\address{University of Cambridge}
\curraddr{Centre for Mathematical Sciences \\
Wilberforce Road, Cambridge
CB3 0WB}
\email{s.andres@statslab.cam.ac.uk}
\thanks{}

\author{Lisa Hartung}
\address{ Courant Institute of Mathematical Sciences ,
New York University,
}
\curraddr{251 Mercer Street, 10012 New York, NY}
\email{lisa.hartung@nyu.edu}
\thanks{This work has been written while the authors were affiliated to the  Rheinische Friedrich-Wilhems Universit\"at Bonn and were partially  supported by the German Research Foundation in the Collaborative Research Center 1060 \lq\lq The Mathematics of Emergent Effects\rq\rq, Bonn.}

\subjclass[2010]{Primary:  60J55, 60J80, 60K37; Secondary: 60G55, 60G70, 60J60.}

\keywords{Branching Brownian motion, additive functional, extremal process, local time, random environment}

\date{\today}

\dedicatory{}

\begin{abstract}

We construct a class of one-dimensional diffusion processes on the particles of branching Brownian motion that are symmetric with respect to the limits of random martingale measures. 
These measures are associated with the extended extremal process of branching Brownian motion and are supported on a Cantor-like set. 
The processes are obtained via a time-change of a standard one-dimensional reflected Brownian motion on $\bbR_+$ in terms of the associated positive continuous additive functionals.

The processes introduced in this paper may be regarded as an analogue of the Liouville Brownian motion which has been recently constructed in the context of a Gaussian free field.
\end{abstract}

\maketitle

\section{Introduction}
Over the last years diffusion processes in random environment, constructed by a random time-change of a standard Brownian motion in terms of singular measures,  appeared in several situations. One prime example is the so-called FIN-diffusion (for Fontes, Isopi and Newman) introduced in \cite{FIN02} which appears 
for instance as the annealed scaling limit for one-dimensional trap models (see \cite{FIN02,BC05,BC06}) and for the one-dimensional random conductance model 
with heavy-tailed conductances (see \cite[Appendix~A]{Ce11}).
Another example is the Liouville Brownian motion, recently constructed in \cite{GRV13,Be13} as the natural diffusion process in the random geometry associated with two-dimensional Liouville quantum gravity.

In this paper we add one more class of examples to the collection. We  consider a time change given by the right-continuous inverse of the positive continuous additive functional whose Revuz measure is the limit of certain random martingale measures that appear in the description of the extremal process of a branching Brownian motion (BBM for short). As a result we obtain a pure jump diffusion process on a Cantor-like set representing the positions of the BBM particles in the underlying Galton-Watson tree.  

Branching Brownian motion has already been introduced in \cite{Moyal, Skorohod64} in the late 1950s and early 1960s. It is a continuous-time Markov branching
process on a probability space $(\Omega, \mathcal{F}, \prob)$ which is constructed as follows.
We start with a continuous-time Galton-Watson process (see e.g.\ \cite{AN}) with branching mechanism $p_k, k\geq 1$, normalised such that $\sum_{i=1}^\infty p_k=1$,
$\sum_{k=1}^\infty k p_k=2$ and $K=\sum_{k=1}^\infty k(k-1)p_k<\infty$. At any time $t$
 we may label the endpoints of the process $i_1(t),\dots, i_{n(t)}(t)$, where $n(t)$ is the number of 
 branches at time $t$. Observe that by our choice of normalisation we have that 
 $\mathbb{E}  n(t)=e^t$. 
BBM is then constructed by starting a Brownian motion at the 
 origin at time 
 zero, running it until the first time the GW process branches, and then starting independent Brownian motions for
 each branch of the GW process starting at the position of the original BM at the branching time. Each of these runs again 
 until the next branching time of the GW occurs, and so on.

 We denote the positions of the $n(t)$ particles at time $t$ by $x_1(t),\dots, x_{n(t)}(t)$. 
 Note that, of course, the positions of these particles do not reflect the position of the 
 particles ``in the tree". 
 
 \begin{remark}
 \label{remark.1}
 By a slight abuse of notation, we also denote by $x_k(s)$ for $s<t$ the particle position of the ancestor of the particle $i_k(t)$ at time $s$. 
 \end{remark}
 
 
Setting $m(t)\ldef\sqrt 2 t - \frac{3}{2 \sqrt{2}} \log(t)$, Bramson \cite{B_M, B_C},  and Lalley and Selke \cite{LS87} showed that
 \begin{equation}\label{extremal.1.1}
 \lim_{t\uparrow\infty}\mathbb{P}\left(\max_{k\leq n(t)}x_k(t)-m(t)\leq x\right)=\mathbb{E} 
 \left[e^{-CZe^{-\sqrt 2 x}}\right],
\end{equation}
 for some constant $C$, where $Z\ldef\lim_{t\uparrow\infty} Z_t$ is the $\prob$-a.s.\ limit of the derivative martingale
 \begin{equation}\label{Z.0}
 Z_t\ldef \sum_{j\leq n(t)}(\sqrt 2 t-x_j(t))e^{\sqrt 2(x_j(t)-\sqrt 2 t)}, \qquad t\geq 0.
 \end{equation}
For $0<r<t$ a truncated version of the derivative martingale
 \begin{align}
 Z_{r,t}(v) \ldef \sum_{j\leq n(t)} \big( \sqrt{2} t -x_j(t) \big) e^{\sqrt{2}(x_j(t)-\sqrt{2}t)} \indicator_{\{\gamma(x_j(r)) \leq v\}}, \qquad v\in \bbR_+,
\end{align}
has been introduced in \cite{BH14}. Here we denote by $\gamma$ an 
  embedding of the particles $\{1,\dots,n(t)\}$ into $\mathbb{R}_+$, which encodes the positions of the particles in the underlying Galton-Watson tree respecting the genealogical distance (see Section~\ref{sec:def_gamma} below for the precise definition). In a sense, the embedding $\gamma$ is a natural continuous-time analogue of the well-established encoding of binary branching processes in discrete time, where the leaves of tree are identified with binary numbers.
 The random measure on $\bbR_+$ associated  with $Z_{r,t}$ is given by
\begin{align}
 M_{r,t} \ldef  \sum_{j\leq n(t)} \big( \sqrt{2} t -x_j(t) \big) e^{\sqrt{2}(x_j(t)-\sqrt{2}t)} \delta_{\gamma(x_j(r))}.
\end{align}
In  \cite{BH14} it has been shown that the vague limit
\begin{align}
 M = \lim_{r\uparrow \infty} \lim_{t\uparrow \infty} M_{r,t} \qquad \text{exists  $\prob$-a.s.}
\end{align}
Furthermore, in \cite[Theorem 3.1] {BH14} an extended convergence result of the extremal process has been proven, namely 
 \begin{equation}\label{extremal.5}
 \sum_{k=1}^{n(t)} \delta_{(\gamma(x_k(t), x_k(t)-m(t))}\Rightarrow  \sum_{i,j}\delta_{(q_i,p_i)+(0,\Delta^{(i)}_j)}, \qquad \text{ on $\mathbb{R}_+\times \mathbb{R}$, as
 $t\uparrow \infty$,}
 \end{equation}
 where  $(q_i,p_i)_{i\in \mathbb{N}}$ are the atoms of a Cox process on $\mathbb{R}_+\times \mathbb{R}$ with intensity measure
 $M( dv)\times Ce^{-\sqrt{2} x}dx$ and $(\Delta_j^{(i)})_{i,j}$  are the atoms of  independent and identically distributed point processes $\Delta^{(i)}$  with 
 \begin{equation}\label{extremal.3}
 \Delta^{(1)} \stackrel{D}{=} \lim_{t\uparrow \infty}
 \sum_{i=1}^{n(t)}\delta_{\tilde x_i(t)-\max_{j\leq n(t)}\tilde x_j(t)},
 \end{equation}
 where $\tilde x(t)$ is a BBM conditioned on $\max_{j\leq n(t)} \tilde x_j(t)\geq \sqrt 2 t$.
Recall that in \cite{ABK_E, ABBS} it was already shown that
$ \sum_{k=1}^{n(t)}\delta_{x_k(t)-m(t)}$ converges to the Poisson cluster process given by the projection of the limit in \eqref{extremal.5} onto the second coordinate.

\subsection{Results}
Let $(\Omega', (B_s)_{s\geq 0}, \mathcal{G}, (\mathcal{G}_s)_{s\geq 0}, (P_x)_{x\in \bbR_+})$ denote a one-dimensional reflected standard Brownian motion $B$ on $\bbR_+$. Recall that $B$ is reversible w.r.t.\ the Lebesgue measure $dx$ on $\bbR_+$. 
Then, the positive continuous additive functional (PCAF) of $B$ having Revuz measure $M_{r,t}$ (see Appendix~\ref{app:PCAF} for definitions) is given by $F_{r,t}: [0,\infty) \rightarrow [0,\infty)$ defined as
\begin{align}\label{eq:def_Frt}
 F_{r,t}(s) \ldef  \int_{\bbR_+} L_s^a \, M_{r,t}(da) =  \sum_{j=1}^{n(t)} \big( \sqrt{2} t -x_j(t) \big) e^{\sqrt{2}(x_j(t)-\sqrt{2}t)} L^{\gamma(x_j(r))}_s,
\end{align}
where $\{L^a, a\in \bbR\}$ denotes the family of local times of $B$. 
Further, we define
 \begin{align} \label{eq:def_F}
  F(s) \ldef \int_{\bbR_+} L_s^a \, M(da), \qquad s\geq 0.
 \end{align}

\begin{theorem} \label{thm:constrF}
 $\prob$-a.s., the following hold. 
\begin{enumerate}
 \item[(i)] There exist a set $\Lambda \subset \Omega'$ with $P_x[\Lambda]=1$ for all $x\in \bbR_+$ on which 
\begin{align} 
 F=\lim_{r\uparrow \infty} \lim_{t\uparrow\infty} F_{r,t}, \qquad \text{in $\sup$-norm on $[0,S]$,}
 \end{align}
 for any $S>0$. In particular, $F$ is continuous, 	increasing and satisfies $F(0)=0$ and $\lim_{s\to \infty} F(s)=\infty$.
 \item[(ii)]  The functional $F$ is  the (up to equivalence) unique PCAF of $B$ with Revuz measure $M$. 
\end{enumerate}
\end{theorem}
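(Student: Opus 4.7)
The argument combines the vague convergence $M_{r,t}\to M$ established in \cite{BH14} with the joint continuity and pathwise compact-support property of the local times of $B$. These two ingredients allow one to pass to the limit inside the integral representation \eqref{eq:def_Frt}; uniform convergence on compact time intervals is then obtained from monotonicity and continuity of the limit $F$. For part (ii), the PCAF axioms are then verified directly and uniqueness invoked via the Revuz correspondence.

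For part (i), fix $\omega\in\Omega$ in a $\prob$-full set where the iterated vague limit $M_{r,t}\to M_r\ldef\lim_{t}M_{r,t}$ and $M_r\to M$ holds, and on which $Z>0$. Take $\Lambda\subset\Omega'$ to be the universal event on which $(a,s)\mapsto L^a_s$ is jointly continuous on $\bbR_+\times[0,\infty)$ and $\sup_{u\leq S}B_u<\infty$ for every $S>0$; this event has full $P_x$-measure for every $x\geq 0$. On $\Lambda$ and for each $s\leq S$, the map $a\mapsto L^a_s$ is continuous on $\bbR_+$ with support contained in $[0,\sup_{u\leq s}B_u]$, hence is a legitimate test function for vague convergence. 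This yields
\begin{equation*}
F_{r,t}(s)\;\xrightarrow[t\to\infty]{}\;F_r(s)\ldef\int_{\bbR_+}L^a_s\,M_r(da)\;\xrightarrow[r\to\infty]{}\;F(s).
\end{equation*}
Since all three families are non-decreasing in $s$, and $F$ is continuous in $s$ (by dominated convergence, using that $\sup_{a,s\leq S}L^a_s<\infty$ and $M$ is a finite Radon measure with $M(\bbR_+)=Z$), a standard Polya-type lemma upgrades the pointwise convergence to uniform convergence on $[0,S]$, first as $t\to\infty$ for fixed $r$ and then as $r\to\infty$. The properties $F(0)=0$ and continuity/monotonicity of $F$ are immediate from its integral representation, and $\lim_{s\to\infty}F(s)=\infty$ follows from recurrence of the reflected Brownian motion (so $L^a_s\uparrow\infty$ $P_x$-a.s.\ for every $a\geq0$), monotone convergence, and $M(\bbR_+)=Z>0$.

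For part (ii), I verify the PCAF axioms directly: continuity and $(\mathcal{G}_s)$-adaptedness hold by part (i), $F(0)=0$, and the additive functional identity $F(s+t)=F(s)+F(t)\circ\theta_s$ is inherited from $L^a_{s+t}=L^a_s+L^a_t\circ\theta_s$ by integration against $M$. To identify the Revuz measure, note that the PCAF $s\mapsto L^a_s$ has Revuz measure $\delta_a$ (a short computation using the reference measure $dx$ together with $\int p_s(x,a)\,dx=1$); by linearity and monotone approximation, the Revuz measure of $F=\int L^\cdot_\cdot\,M(da)$ is $M$. Uniqueness up to equivalence is the classical Revuz correspondence theorem for symmetric Hunt processes, recalled in Appendix~\ref{app:PCAF}.

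The main obstacle is technical rather than conceptual: the exceptional set $\Lambda$ must be chosen simultaneously for all starting points $x\in\bbR_+$ and must be independent of $\omega\in\Omega$, and the various limits must commute with the integrations against $M$. The pathwise compact-support property of $a\mapsto L^a_s$ (depending on $\omega'$) has to be compatible with the vague convergence of the random measures $M_{r,t}$ (depending on $\omega$); this is essentially a Fubini-type issue, but it is the point at which care is required.
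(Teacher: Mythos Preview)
Your argument for part~(i) is essentially the paper's: fix a good environment~$\omega$, use joint continuity and compact support of $a\mapsto L^a_s$ to apply vague convergence pointwise in~$s$, then upgrade to uniform convergence on $[0,S]$ via monotonicity of the approximants and continuity of the limit (the paper phrases your ``Polya-type lemma'' as ``pointwise convergence of continuous increasing functions towards a continuous function on a compact set implies uniform convergence'').

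For part~(ii) your route differs. You verify the PCAF axioms for $F$ directly from the additivity of the local times and then argue that the Revuz correspondence is linear, so that $F=\int L^a_\cdot\,M(da)$ has Revuz measure $\int\delta_a\,M(da)=M$. The paper instead stays at the level of the approximants: it writes the Revuz identity for $F_{r,t}$ (a finite sum, where linearity is trivial), then passes to the limit on both sides. The limit on the left requires a separate uniform-integrability lemma (their Lemma~3.3, controlling $\sup_{r,t}E_x\big[\int_0^S f(B_s)\,dF_{r,t}(s)\big]$ by $\|f\|_\infty\,|Z_t|\,E_x[\sup_a L^a_S]$); the limit on the right uses vague convergence and continuity of $a\mapsto E_x[L^a_1]$. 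Your direct approach is more economical, but the phrase ``by linearity and monotone approximation'' hides the only nontrivial step: one needs the Fubini-type interchange
\[
\int_0^1 f(B_s)\,dF(s)\;=\;\int_{\bbR_+}\!\Big(\int_0^1 f(B_s)\,dL^a_s\Big)\,M(da)\;=\;\int_{\bbR_+} f(a)\,L^a_1\,M(da),
\]
which is not automatic since $dF$ is only defined as the Stieltjes measure of $s\mapsto\int L^a_s\,M(da)$. It can be justified (Riemann sums and monotonicity, or a monotone-class argument), after which integration in~$x$ and $\int_{\bbR_+} E_x[L^a_1]\,dx=1$ finish the identification. The paper's approximation route trades this Fubini step for the uniform-integrability lemma; either works.
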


\begin{definition}
We define  the  process $\cB$  as the time-changed Brownian motion 
\begin{align}
 \cB(s) \ldef B_{F^{-1}(s)}, \qquad s\geq 0,
\end{align} 
where $F^{-1}$ denotes the right-continuous inverse of the PCAF $F$ in \eqref{eq:def_F}.
\end{definition}

By the general theory of time changes of Markov processes, in particular cf.\ \cite[Theorem~6.2.1]{FOT11}, $\cB$ is a right-continuous strong Markov process on $\supp M$, which is $M$-symmetric and induces a strongly continuous transition semigroup. 
Note that the empty set is the only polar set for the one-dimensional Brownian motion, so the measure $M$ does trivially not charge polar sets. Further, for any $0<r<t$ set
\begin{align}
  \cB_{r,t}(s) \ldef B_{F_{r,t}^{-1}(s)}, \qquad s\geq 0,
\end{align}
where $F_{r,t}^{-1}$ denotes the right-continuous inverse of $F_{r,t}$. Then, as $r$ and $t$ tend to infinity, the processes $\cB_{r,t}$ converge  in law  towards $\cB$ on the Skorohod space $D((0,\infty), \bbR_+)$ equipped with $L^1_{\mathrm{loc}}$-topology (see Theorem~\ref{thm:approx_proc} below). In a sense $\cB_{r,t}$ may be regarded as a random walk on the leaves of the underlying Galton-Watson tree. In addition, we also provide an approximation result for $\cB$ in terms of random walks on a lattice (see Theorem~\ref{thm:rw_approx} below).


Similarly to the above procedure, for any $\sigma \in (0,1)$, one obtains a measure $M^\sigma$ from a truncation of the McKean martingale  
\begin{align}
Y^{\sigma}_t\ldef \sum_{i=1}^{n(t)} e^{\sqrt{2}\sigma x_k(t)-(1+\sigma^2)t}, \qquad t\geq 0.
\end{align}
Then one can define the process $\cB^\sigma$ as $\cB^\sigma(s)\ldef B_{(F^\sigma)^{-1}(s)}$ with  $F^\sigma$ being the PCAF  associated with $M^\sigma$. We refer to Section~\ref{sec:subcrit} for further details.

\bigskip
A diffusion process being similar to but different from $\cB$ is the FIN-diffusion introduced in \cite{FIN02}. It is a one-dimensional singular diffusion in random environment given by a random speed measure $\rho=\sum_i v_i \delta_{y_i}$, where $(y_i,v_i)$ is  an inhomogeneous Poisson
point process on $\bbR \times (0,\infty)$ with intensity measure $dy \, \alpha v^{-1-\alpha} \, dv$ for $\alpha \in (0,1)$. Let $F_{\mathrm{FIN}}$ be the PCAF 
\begin{align}
F_{\mathrm{FIN}}(s) \ldef \int_{\bbR} L^a_s(W) \, \rho(da)
\end{align}
with $\{L^a(W), a\in \bbR\}$ denoting the family of local times of a one-dimensional Brownian motion $W$. 
Then, the FIN-diffusion $\{ \mathrm{FIN}(s), s\geq 0\}$ is the diffusion process defined as the time change $\mathrm{FIN}(s) \ldef W_{(F_{\mathrm{FIN}})^{-1}(s)}$ of the Brownian motion $W$.
At first sight the measure $\rho$ and the process $\mathrm{FIN}$ resemble strongly $M$ and $\cB$, respectively. However, one significant difference is that $\rho$ is a discrete random measure with a set of atoms being dense in $\bbR$, so that $\rho$ has full support $\bbR$ and  $\mathrm{FIN}$ has continous sample paths (see \cite{FIN02} or \cite[Proposition~3.2]{BC06}), while the measure $M$ is concentrated on a Cantor-like set and the sample paths of $\cB$ have jumps.

\bigskip
Another prominent example for a log-correlated process is the Gaussian Free Field (GFF) on a two-dimensional domain. In a sense the processes $\cB$ or $\cB^\sigma$ introduced in this paper can be regarded as the BBM-analogue of the Liouville Brownian motion (LBM) recently constructed in \cite{GRV13} and in a weaker form in \cite{Be13}. More precisely, let $X$ denote a (massive) GFF on a domain $D\subseteq \bbR^2$, then in the subcritical case the analogue of the martingale measure $M^\sigma$ 
can be constructed by using the theory of Gaussian multiplicative chaos established by Kahane in \cite{Ka85} (see also \cite{RV14} for a review).
On a formal level the resulting so-called Liouville measure on $D$ is given by
\begin{align}
e^{\gamma X(z)-\frac{\gamma^2}{2} \bbE[X(z)^2]} \, dz, \qquad \gamma\in (0,2).
\end{align}
The associated PCAF $F_\mathrm{LBM}$, which can formally be written as
\begin{align}
 F_{\mathrm{LBM}}(s)=\int_0^s e^{\gamma X(W_r)-\frac{\gamma^2}{2} \bbE[X(W_r)^2]} \, dr,
\end{align}
where $W$ denotes a two-dimensional standard Brownian motion on the domain $D$, has been constructed in \cite{GRV13} (cf.\ also \cite[Appendix~A]{AK14}). Then, the Liouville Brownian motion $\{\mathrm{LBM}(s), s\geq 0\}$ is defined as $\mathrm{LBM}(s)\ldef W_{F_\mathrm{LBM}^{-1}(s)}$.

In the critical case $\gamma=2$ the corresponding analogue of the derivative martingale measure $M$ can be interpreted as being given by
\begin{align}
 -  \big(X(z)-2\bbE[X(z)^2]\big) \, e^{2 (X(z)-\bbE[X(z)^2])} \, dz,
\end{align}
which has been introduced in \cite{DRSV14,DRSV14a}. The corresponding PCAF and the critical Liouville Brownian motion have been constructed in \cite{RV15}.
In the context of a discrete GFF such measures have been studied in \cite{BisLou13, BisLou14, BisLou16}, where in \cite{BisLou13} an analogue of the extended convergence result in \eqref{extremal.5} has been established.

However, a major difference between the processes $\cB$ and $\mathrm{LBM}$ is that for the LBM the functional $F_\mathrm{LBM}$ and the planar Brownian motion $W$ are independent (cf.\ \cite[Theorem 2.21]{GRV13}), while in the present paper the functional $F$ and the Brownian motion $B$ are \emph{dependent}  since $L$ is the local time of $B$. A similar phenomenon can be observed in the context of trap models, where in dimension $d=1$ the underlying Brownian motion and the clock process of the FIN diffusion are dependent and in dimension $d \geq 2$ the Brownian motion and the clock process of the scaling limit, known as the so-called fractional kinetics motion,  are independent.

\bigskip

In \cite{CHK16} Croydon, Hambly and Kumagai consider time-changes of stochastic processes and their discrete approximations in a quite general framework for the case when the underlying process is point recurrent, meaning that it can be described in terms of its resistance form (examples include the one-dimensional standard Brownian motion or Brownian motion on tree-like spaces and certain low-dimensional fractals). The results cover the FIN-diffusion and a one-dimensional version of the LBM. However, the results of the present paper do not immediately follow from the approximation result in \cite{CHK16} since the required convergence of the measures $M_{r,t}$ towards $M$ in the Gromov-Hausdorff-vague topology on the non-compact space $\bbR_+$ needs to be verified.  

\bigskip

The rest of the paper is organised as follows. In Section~\ref{sec:prelim}  we first recall the definitions of a PCAF and its Revuz measure and we provide the precise definition of the embedding $\gamma$ and the (truncated) critical martingale measures. Then we prove Theorem~\ref{thm:constrF} in Section~\ref{sec:apprPCAF} and we specify some properties of the process $\cB$, in particular we describe its Dirichlet form. In Section~\ref{sec:rw} we show random walk approximations of $\cB$. In Section~\ref{sec:subcrit} we sketch the construction of the process $\cB^\sigma$ associated with the martingale measure obtained from the McKean martingale. Finally, in the appendix we collect some properties of Brownian local times needed in the proofs.


\section{Preliminaries} \label{sec:prelim}

 \subsection{Additive functionals and Revuz measures} \label{app:PCAF}
First we briefly recall the definition of an additive functional of a symmetric Markov process and some of its main properties, for more details on this topic see e.g.\ \cite{FOT11, CF12}. Let
 $E$ be a locally compact separable metric space and let $m$ be a positive Radon measure on $E$ with $\supp(m)=E$. We consider an $m$-symmetric conservative Markov process
$(\Omega',\cG, (\cG_t)_{t\geq 0}, (X_t)_{t\geq 0}, (P_x)_{x\in E})$ and denote by $\{\theta_t\}_{t\geq 0}$ be the family of shift mappings on
$\Omega'$, i.e.\ $X_{t+s}=X_t \circ \theta_s$ for $s,t \geq 0$. 

\begin{definition}
i) A $[0,\infty]$-valued stochastic process $A=(A_t)_{t\geq 0}$
on $(\Omega',\mathcal{G})$
is called a \emph{positive continuous additive functional (PCAF)} of $X$
(in the strict sense), if $A_t$ is $\mathcal{G}_t$-measurable for every
$t\geq 0$ and if there exists a set $\Lambda \in \mathcal{G}$,
called a \emph{defining set} for $A$, such that
\begin{enumerate}
\item [a)] for all $x\in E$, $P_x[\Lambda]=1$,
\item [b)] for all $t\geq 0$, $\theta_t(\Lambda)\subset \Lambda$,
\item [c)] for all $\om \in \Lambda$,
$[0,\infty)\ni t\mapsto A_t(\om)$ is a $[0,\infty)$-valued continuous function
with $A_0(\om)=0$ and
\begin{align}
 A_{t+s}(\om)=A_t(\om)+ A_s \circ \theta_t (\om), \qquad \forall s,t\geq 0.
\end{align}
\end{enumerate}

ii) Two such functionals $A^1$ and $A^2$ are called \emph{equivalent}
if $P_x[A^1_t=A^2_t]=1$ for all $t>0$, $x\in E$, or equivalently,
there exists a defining set $\Lambda \in \mathcal{G}_\infty$ for
both $A^1$ and $A^2$ such that $A^1_t(\om)=A^2_t(\om)$ for all $t\geq 0$,
$\om\in \Lambda$.

iii) For any such $A$, a Borel measure $\mu_A$ on $E$ satisfying 
\begin{align} \label{eq:rev_corr_gen}
 \int_{E} f(y) \, \mu_A(dy)
	=\lim_{t\downarrow 0} \frac 1 t \int_{E} E_x \Bigl[ \int_0^t f(B_s) \, dA_s \Bigr] \, m(dx)
\end{align}
for any non-negative Borel function $f:E\to[0,\infty]$ is called the
\emph{Revuz measure} of $A$, which exists uniquely by general theory
(see e.g.\ \cite[Theorem~A.3.5]{CF12}). 
\end{definition}

We recall that for a given a Borel measure $\mu_A$ charging no polar sets a PCAF $A$ satisfying \eqref{eq:rev_corr_gen} exists uniquely up to equivalence (see e.g.\ \cite[Theorem 5.1.3]{FOT11}).
Observe that in the present setting where $m$ is invariant the measure $\mu_A$ is already characterised by the simpler formula
\begin{align} \label{eq:revuz_invariant}
 \int_{E} f(y) \, \mu_A(dy)
	=\int_{E} E_x \Bigl[ \int_0^1 f(B_s) \, dA_s \Bigr] \, m(dx).
\end{align}

\subsection{Definition of the embedding} \label{sec:def_gamma}

We start by recalling the definition of the embedding $\gamma$ given in \cite{BH14} which is a slight variant of the familiar Ulam-Neveu-Harris labelling 
(see e.g.\ \cite{HaHa06}). 
We denote the set of (infinite) multi-indices by
$\mathbf{I}\equiv \mathbb{Z}_+^{\mathbb{N}},$
and let $\mathbf{F}\subset \mathbf{I}$ be the subset of multi-indices that contain 
only finitely many entries different from zero. Ignoring leading zeros, we see
that 
\begin{equation}\label{multi.2}
\mathbf{F} = \cup_{k=0}^\infty \mathbb{Z}_+^k,
\end{equation}
where $\mathbb{Z}_+^0$ is either the empty multi-index or the multi-index containing only 
zeros.

We encode a continuous-time Galton-Watson process by the set of branching times,
$\{t_1<t_2<\dots< t_{W(t)}<\dots\}$, where $W(t)$ denotes the number of branching 
times up to time $t$, and by a consistently assigned set of multi-indices
for all times $t\geq 0$. 
To do so, (for a given tree) 
the sets of multi-indices, $\tau(t)$ at time $t$, are constructed as follows.

\begin{figure}[htbp]
 \includegraphics[width=13.5cm]{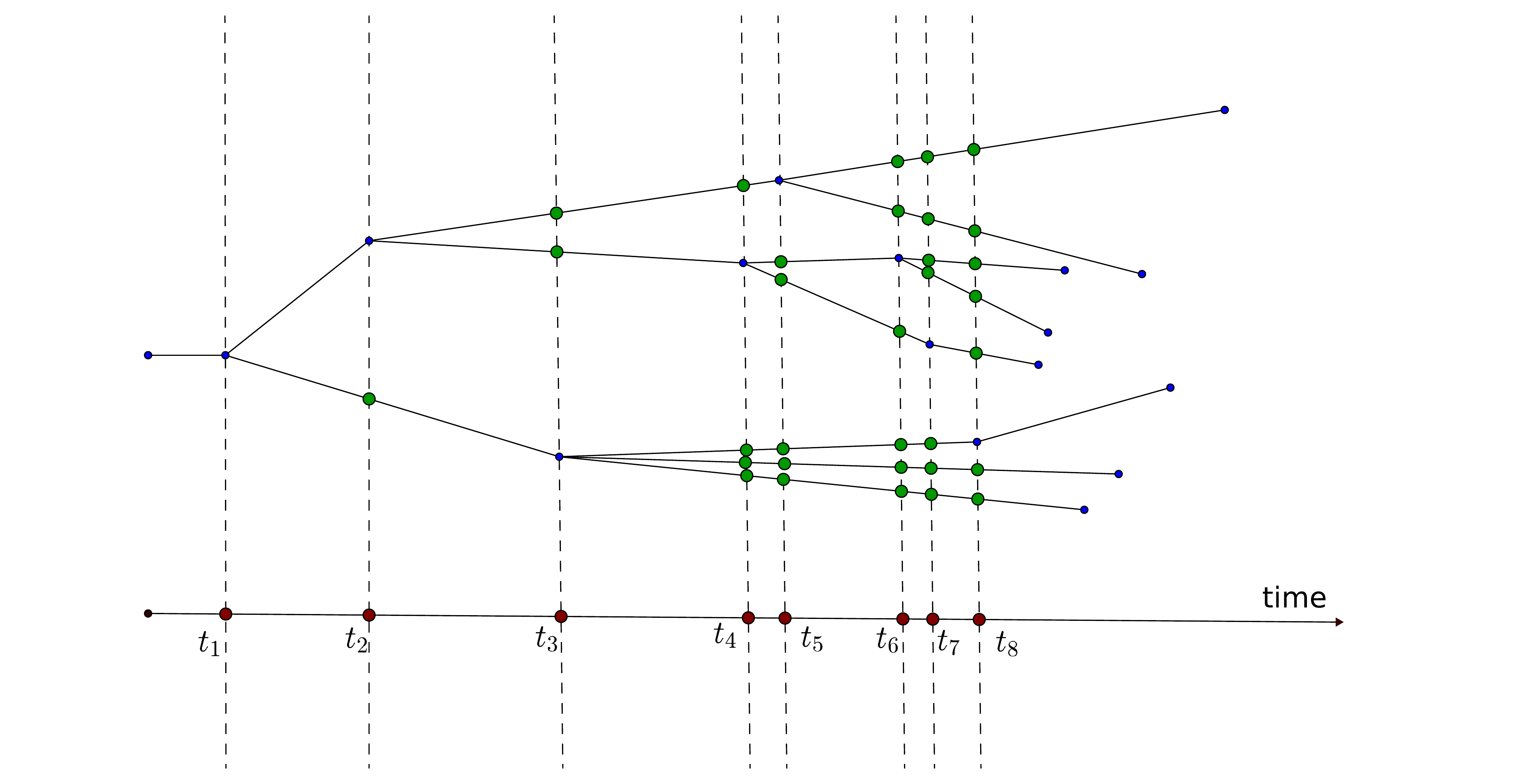}
 \caption{Construction of $\widetilde T$: The green nodes were introduced into the tree `by hand'.}\label{figure.1}
\end{figure}

\begin{itemize}
 \item $\{(0,0,\dots)\} =\{u(0)\}=\tau(0)$.
 \item  for all $j\geq 0$,  for all  $t\in [t_j,t_{j+1})$,   $\tau(t)=\tau(t_j)$.
 \item If $u\in \tau(t_j)$ then $u+(\underbrace{0,\dots,0}_{W(t_j) \times 0}, k,0,\dots)\in 
 \tau(t_{j+1})$ if $0\leq k\leq l^u(t_{j+1})-1$, where
 \begin{equation}\label{map.1}
l^u(t_j)=\# \{\mbox{ offsprings of  the particle corresponding to }u\, \mbox{at time}\, t_j \}.
\end{equation}
\end{itemize}
We use the convention that, if a given branch of the tree does not ``branch" at time $t_j$, we
add to the underlying 
Galton-Watson  at this time an extra vertex where $l^u(t_j)=1$ (see Figure \ref{figure.1}). We call the
resulting tree $\widetilde  T_t$. 

One relates the assignment of labels in the following backward consistent way. 
For  $u\equiv (u_1,u_2,u_3,\dots)\in \mathbb{Z}_+^\mathbb{N}$, we define the function $u(r), r\in \mathbb{R}_+$,  through
 \begin{equation}\label{ multi.3}
u_\ell(r)\equiv \begin{cases}  u_\ell,&\,\, \mbox{  if}\,\, t_\ell\leq r,\\
0,&\,\, \mbox{  if}\,\, t_\ell> r.
\end{cases}
\end{equation}
Clearly,  if $u(t)\in \tau(t)$ and  $r\leq t$, then $u(r)\in \tau(r)$.  This allows to define the 
\emph{boundary} of the 
tree at infinity by
$\partial \mathbf{T} \equiv \left\{ u\in \mathbf{I}: \forall t<\infty, u(t)\in \tau(t)\right\}$.
\index{boundary!of Galton-Watson process}
 In this way we identify each leaf of the Galton-Watson tree at time $t$,  $i_{k}(t)$ with $k\in\{1,\dots,n(t)\}$, with some multi-label $u^k(t)\in \tau(t)$. We define the embedding $\gamma$ by
\begin{equation}\label{map.3}
\gamma(u(t))\equiv\sum_{j=1}^{W(t)} {u_{j}(t)}e^{-t_j}.
\end{equation}
For a given $u$, the function $(\gamma(u(t)), t\in \mathbb{R}_+)$ describes a trajectory of a particle in $\mathbb{R}_+$,
which converges to some point $\gamma(u)\in 
\mathbb{R}_+$, as $t\uparrow \infty$, $\prob$-a.s. Hence also the sets $\gamma(\tau(t))$ converge, for any realisation of the tree, to some (random) set $\gamma(\tau(\infty))$.  


Recall that in BBM there is also the position of the Brownian motion
$x_k(t)$ of the $k$-th particle at time $t$. Thus to any ``particle" at time $t$ we can now associate the position $( \gamma(u^k(t)),x_k(t))$, in $\mathbb{R}_+\times\mathbb{R}$. Hoping that there will not be too much confusion, 
we will identify $\gamma(u^k(t))$ with $\gamma(x_k(t))$. 

\subsection{The critical martingale measure}

A key object is the derivative martingale $Z_t$  defined in $\eqref{Z.0}$. Recall the following result proven in \cite{LS87}.
\begin{lemma} \label{lem:Z}
The limit  $Z\ldef \lim_{t\to \infty} Z_t$ exists $\prob$-a.s.\ and $\min_{i\leq n(t)} (\sqrt 2 t - x_i(t)) \rightarrow \infty$ as $t\to \infty$ $\prob$-a.s.
\end{lemma}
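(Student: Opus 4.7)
The plan is to follow the classical truncation strategy (essentially due to Lalley-Selke). First, I would pass to the tilted variable $y_j(s) \ldef \sqrt 2 s - x_j(s)$, so that each $y_j$ is a Brownian motion with drift $\sqrt 2$ (heading to $+\infty$) and
$$Z_t = \sum_{j\leq n(t)} y_j(t) \, e^{-\sqrt{2} y_j(t)}.$$
For $h(y)\ldef y e^{-\sqrt 2 y}$ a direct computation gives $\tfrac12 h''+\sqrt 2 h'=-h$, which together with the branching property and the normalisation $\sum k p_k =2$ shows that $Z_t$ is an $\cF_t$-martingale. The same calculation for $g(y)=e^{-\sqrt 2 y}$ gives $\tfrac12 g''+\sqrt 2 g'=-g$, so the additive martingale $W_t \ldef \sum_{j\leq n(t)} e^{-\sqrt 2 y_j(t)}$ is a non-negative $\cF_t$-martingale with $W_0=1$. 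I would then invoke the classical fact that at the critical parameter $\sqrt 2$ one has $W_t\to 0$ $\prob$-a.s.

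The key idea is to truncate to make things non-negative. For $K>0$, let $\tau_j^{(K)}\ldef \inf\{s\geq 0 : y_j(s) < -K\}$, defined along the ancestral line of particle $j$, and set $\tau^{(K)}\ldef \min_j \tau_j^{(K)}$, together with
$$Z_t^{(K)} \ldef \sum_{j\leq n(t)} \bigl(K + y_j(t)\bigr) \, e^{-\sqrt 2 y_j(t)} \indicator_{\{\tau^{(K)}_j > t\}}.$$
By optional stopping combined with the branching decomposition at successive split times, $Z_t^{(K)}$ is a non-negative martingale, so it converges $\prob$-a.s.\ by standard martingale convergence. On the event $\{\tau^{(K)}=\infty\}$ one has the deterministic identity $Z_t = Z_t^{(K)} - K W_t$, and since $W_t\to 0$ almost surely, the a.s.\ convergence of $Z_t^{(K)}$ transfers to $Z_t$.

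To close the argument I need $\prob(\tau^{(K)}=\infty)\to 1$ as $K\to\infty$. But if $\tau^{(K)}\leq s<\infty$, then some particle satisfies $y_j(s)<-K$, hence $W_s \geq e^{\sqrt 2 K}$; Doob's $L^1$-maximal inequality for the non-negative martingale $W$ with $W_0=1$ gives $\prob(\tau^{(K)}<\infty)\leq e^{-\sqrt 2 K}\to 0$. This yields the first assertion. For the second assertion, the same exponential right-tail bound on $\sup_s W_s$ (equivalently on $\max_{j\leq n(s)} x_j(s)-\sqrt 2 s$) would be applied along a sequence $t=t_n\uparrow\infty$: a Borel-Cantelli argument with $K_n\to\infty$ chosen so that $\sum_n e^{-\sqrt 2 K_n}<\infty$ shows $\min_j y_j(t_n)\to\infty$ $\prob$-a.s., and the interpolation between sampling times uses modulus-of-continuity estimates for the Brownian increments together with the a.s.\ polynomial-in-$t$ bound on $n(t)$.

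The most delicate step, in my view, is not the existence of the limit $Z$ (which is essentially automatic once truncation plus the vanishing of $W_t$ are in place) but rather the upgrade from the one-sided exponential tail bound at a fixed time to the a.s.\ assertion $\min_j(\sqrt 2 t - x_j(t))\to\infty$. Tightness of $\max_j x_j(t)-m(t)$ coming from Bramson's theorem is not enough by itself; what is needed is the quantitative right-tail $\prob(\max_j x_j(t)-m(t)\geq y)\leq C e^{-\sqrt 2 y}$ uniform in $t$, combined with a careful interpolation ruling out large oscillations of the minimum between successive sampling times. This is where a naive truncation-plus-martingale argument has to be supplemented by the sharper maximum estimates from the BBM literature.
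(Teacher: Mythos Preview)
The paper itself does not prove this lemma; it simply records it as a known result of Lalley and Selke \cite{LS87}. Your sketch for the existence of the limit $Z$ is precisely the Lalley--Selke truncation argument and is correct: $Z_t^{(K)}$ is a non-negative martingale, hence converges a.s., and on the event $\{\tau^{(K)}=\infty\}$ the identity $Z_t = Z_t^{(K)} - K W_t$ together with $W_t\to 0$ transfers the convergence to $Z_t$; then $\prob(\tau^{(K)}=\infty)\uparrow 1$ finishes the job.

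Your handling of the second assertion, however, is both more complicated than needed and, in its first form, does not work. The Doob bound $\prob(\sup_s W_s \geq \lambda)\leq 1/\lambda$ is useless for $\lambda = e^{-\sqrt 2 K_n}<1$, so the Borel--Cantelli step as initially written gives nothing; you then reach for the Bramson right-tail estimate on $\max_j x_j(t)-m(t)$, which does yield summable bounds along a subsequence, but the interpolation between sampling times and the control on $n(t)$ are genuine additional work. All of this is avoidable. You already cite the fact that $W_t\to 0$ $\prob$-a.s.; since
\[
W_t \;=\; \sum_{j\leq n(t)} e^{-\sqrt 2\, y_j(t)} \;\geq\; e^{-\sqrt 2\, \min_{j\leq n(t)} y_j(t)},
\]
one has $\min_{j\leq n(t)} y_j(t) \geq -\tfrac{1}{\sqrt 2}\log W_t \to \infty$ $\prob$-a.s. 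This one-line deduction is the standard route and makes the Borel--Cantelli and interpolation machinery unnecessary.
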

For $0<r<t$ the truncated version
\begin{align}
 Z_{r,t}(v) \ldef \sum_{j\leq n(t)} \big( \sqrt{2} t -x_j(t) \big) e^{\sqrt{2}(x_j(t)-\sqrt{2}t)} \indicator_{\{\gamma(x_j(r)) \leq v\}}, \qquad v\in \bbR_+,
\end{align}
has been recently introduced in \cite{BH14}. In particular, by \cite[Lemma 3.2]{BH14} for every $v\in \bbR_+$ the limit
\begin{align} \label{eq:conv_Zrt}
 Z(v)\ldef \lim_{r\uparrow \infty} \lim_{t\uparrow \infty} Z_{r,t}(v) 
\end{align}
exists $\prob$-a.s. Consider now the associated measures on $\bbR_+$ given by
\begin{align}
 M_{r,t} \ldef  \sum_{j\leq n(t)} \big( \sqrt{2} t -x_j(t) \big) e^{\sqrt{2}(x_j(t)-\sqrt{2}t)} \delta_{\gamma(x_j(r))},
\end{align}
and denote by $M$ the Borel measure on $\bbR_+$ defined via $M([0,v])=Z(v)$ for all $v\in \bbR_+$. Then, \eqref{eq:conv_Zrt} implies that $\prob$-a.s.
\begin{align}
 M = \lim_{r\uparrow \infty} \lim_{t\uparrow \infty} M_{r,t} \qquad \text{vaguely}.
\end{align}
By \cite[Proposition~3.2]{BH14}  $M$ is $\prob$-a.s.\ non-atomic. Moreover, due to the recursive structure of the underlying GW-tree
 $M$ is supported on some Cantor-like set $\cX$.

\section{Approximation of the PCAF and properties of $\cB$} \label{sec:apprPCAF}
\subsection{Proof of Theorem~\ref{thm:constrF}}

Let $\Omega':=C([0,\infty),\bbR)$ and let $W=(W_t)_{t\geq 0}$ be the coordinate
process on $\Omega'$ and set $\mathcal{G}^0_\infty:=\sigma(W_s; \, s<\infty)$
and $\mathcal{G}^0_t:=\sigma(W_s; \, s\leq t)$, $t\geq 0$. Further, let
$\{P_x\}_{x\in \bbR}$ be the family of probability measures on
$(\Omega',\mathcal{G}^0_\infty)$ such that for each $x\in \bbR$,
$W=(W_t)_{t\geq 0}$ under $P_x$ is a one-dimensional Brownian motion starting
at $x$. We denote by $\{ \mathcal{G}_t\}_{t\in [0,\infty]}$ the minimum completed
admissible filtration for $W$ and by $L(W)=\{L_t^a(W), t\geq 0, a\in \bbR\}$  the random field of local times of $W$.

Now we set $B_t\ldef |W_t|$, $t\geq 0$, so that $(\Omega',\cG, (\cG_t)_{t\geq 0}, (B_t)_{t\geq 0}, (P_x)_{x\in \bbR_+})$ is a reflected Brownian motion on $\bbR_+$. Then, the family $L\equiv L(B)=\{L_t^a(B), t\geq 0, a\in \bbR_+\}$ of local times of $B$ is given by
\begin{align} \label{eq:def_L}
 L_t^a\equiv L_t^a(B)=L_t^a(W)+ L_t^{-a}(W), \qquad t\geq 0, \, a\in \bbR_+
\end{align}
(cf.\ \cite[Exercise VI.1.17]{RY99}). 

\begin{prop} \label{prop:FrtPCAF}
 For $\bbP$-a.e.\ $\om$, there exists $\tau_0=\tau_0(\om)$ such that for all $t\geq \tau_0$ and $0\leq r <t$ the following hold.
\begin{enumerate}
 \item[(i)] The unique PCAF of $B$ with Revuz measure $M_{r,t}$ is given by
\begin{align}\Eq(lisa.F1)
 F_{r,t}: [0,\infty)\rightarrow [0,\infty) \quad  s \mapsto    \sum_{j=1}^{n(t)} \big( \sqrt{2} t -x_j(t) \big) e^{\sqrt{2}(x_j(t)-\sqrt{2}t)} L^{\gamma(x_j(r))}_s.
\end{align}

\item[(ii)] There exist a set $\Lambda \subset \Omega'$ with $P_x[\Lambda]=1$ for all $x\in \bbR_+$, on which $F_{r,t}$ is continuous, increasing and
satisfies $F_{r,t}(0)=0$ and $\lim_{s \to \infty} F_{r,t}(s)=\infty$.
\end{enumerate}
\end{prop}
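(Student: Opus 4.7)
The plan is to exploit that $F_{r,t}$ is a \emph{finite} nonnegative linear combination of Brownian local times, so the statement reduces to classical properties of local times plus the linearity of the Revuz correspondence. The only subtlety is positivity of the coefficients $c_j(t) \ldef (\sqrt{2}t - x_j(t))\, e^{\sqrt{2}(x_j(t)-\sqrt{2}t)}$, and this is what forces the $\om$-dependent threshold $\tau_0$.

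First, I would fix $\tau_0(\om)$ using Lemma~\ref{lem:Z}: since $\min_{j\leq n(t)} (\sqrt 2 t - x_j(t)) \to \infty$ $\prob$-a.s., there is a set of full $\prob$-measure on which $c_j(t) > 0$ for all $t \geq \tau_0$ and all $j \leq n(t)$. For any such $t$ and any $r\in [0,t)$, the measure $M_{r,t}$ is then a finite positive measure on $\bbR_+$ supported on the $n(t)$ points $\gamma(x_j(r))$.

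For (i), I would invoke the classical fact (recalled in Appendix~\ref{app:PCAF}) that for the reflected Brownian motion $B$, the local time $(L^a_s)_{s\geq 0}$ at a level $a\in \bbR_+$ is the unique (up to equivalence) PCAF with Revuz measure $\delta_a$. Since the Revuz correspondence is linear over nonnegative finite combinations, the finite sum in \eqv(lisa.F1) is then a PCAF of $B$ whose Revuz measure is exactly $\sum_j c_j(t)\, \delta_{\gamma(x_j(r))} = M_{r,t}$; uniqueness completes~(i).

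For (ii), I would take $\Lambda \subset \Omega'$ to be the intersection of the event on which $(a,s)\mapsto L^a_s(W)$ is jointly continuous (Trotter's theorem), which forces the same for $(a,s)\mapsto L^a_s$ via \eqref{eq:def_L}, with the event on which $L^a_\infty = \infty$ for every rational $a$ (which holds $P_x$-a.s.\ for every $x$ by recurrence of one-dimensional reflected Brownian motion and a countable union bound). Joint continuity upgrades this to $L^a_\infty = \infty$ for \emph{every} $a\in \bbR_+$ on $\Lambda$, and $P_x[\Lambda]=1$ for every $x\in \bbR_+$. On $\Lambda$, each summand in \eqv(lisa.F1) is continuous, nondecreasing in $s$, vanishes at $s=0$, and blows up as $s\to\infty$; since on the good $\om$-event of the first step all $c_j(t)$ are strictly positive and $n(t)\geq 1$, these properties descend to $F_{r,t}$, giving (ii). The main point to watch is that $\Lambda$ must be chosen independently of $r$, $t$, and $\om$; this is automatic since both constituent events are formulated purely in terms of the Brownian path, and the uniformity in $a$ provided by joint continuity removes any need to restrict to the (tree-dependent) countable set of levels $\gamma(x_j(r))$ that can actually appear.
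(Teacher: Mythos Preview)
Your proof is correct and follows essentially the same route as the paper: the paper's proof simply cites Lemma~\ref{lem:Z} for the positivity threshold $\tau_0$, Lemma~\ref{lem:loc_dirac} for the identification of $L^a$ as the PCAF with Revuz measure $\delta_a$ (giving (i) by linearity), and Lemma~\ref{lem:propL} for the pathwise properties of local times (giving (ii)). Your argument spells out the construction of $\Lambda$ a bit more explicitly, but the content is the same.
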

\begin{proof}
 Recall that $\min_{i\leq n(t)} (\sqrt 2 t - x_i(t)) \rightarrow \infty$ $\prob$-a.s.\ as $t\to \infty$ by Lemma~\ref{lem:Z}. Then, the statement follows immediately from Lemma~\ref{lem:loc_dirac} and Lemma~\ref{lem:propL}. 
\end{proof}

We now turn to the proof of Theorem~\ref{thm:constrF}.

\begin{proof}[Proof of Theorem~\ref{thm:constrF} (i)]
 Fix any environment $\om \in \Omega$ such that Proposition~\ref{prop:FrtPCAF} holds and $(M_{r,t})$ converges vaguely to $M$ on $\bbR_+$.  In particular,
 \begin{align} \label{eq:vague_conv}
 \lim_{r\uparrow \infty}  \lim_{t\uparrow \infty} \int_{\bbR_+} f(a) \, M_{r,t}(da) =  \int_{\bbR_+} f(a) \, M(da)
 \end{align}
 for all continuous functions $f$ on $\bbR_+$ with compact support. 
 
 By Lemma~\ref{lem:propL} there exists a set $\Lambda \subset \Omega'$ with $P_x[\Lambda]=1$ for all $x\in \bbR_+$ such that $(a,t)\mapsto L_t^a(\om')$ is jointly continuous for all $\om'\in \Lambda$. In particular, for any fixed $s\in [0,S]$ we have that $a\mapsto L_s^a(\om')$ is continuous with compact support $\big[0, \sup_{r\leq s} B_r(\om')\big]$.
Now, by choosing $f(a)=L_s^a(\om')$ in \eqref{eq:vague_conv} we obtain
\begin{align}
\lim_{r\uparrow \infty}  \lim_{t\uparrow \infty} \int_{\bbR_+} L_s^a(\om') \, M_{r,t}(da) =  \int_{\bbR_+} L_s^a(\om') \, M(da),
\end{align}
and therefore pointwise convergence of $F_{r,t}$  towards $F$ on $[0,S]$. 
Recall that by Proposition~\ref{prop:FrtPCAF} the functionals  $F_{r,t}$ are increasing for $t\geq \tau_0(\om)$. Since pointwise convergence of continuous increasing functions towards a continuous function on a compact set implies uniform convergence, the claim follows.
\end{proof} 
 
\begin{remark}
Alternatively, Theorem~1.2 (i) can also be derived from the result in \cite[Theorem~1 (3)]{St63}.  
\end{remark}

For the identification of $F$  as the unique PCAF with Revuz measure $M$ we need a preparatory lemma.

\begin{lemma} \label{lem:uiFrt}
 For $\bbP$-a.e.\ $\om$, there exists $r_0=r_0(\om)$ such that the following holds. For any $x\in \bbR_+$, $S>0$ and any bounded Borel measurable function $f: \bbR_+ \rightarrow [0,\infty)$ the family $\{ \int_0^S f(B_s) \, dF_{r,t}(s) \}_{t\geq r \geq r_0}$ is uniformly $P_x$-integrable.
\end{lemma}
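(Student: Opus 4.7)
The plan is to reduce the uniform integrability of the family $\{\int_0^S f(B_s)\,dF_{r,t}(s)\}_{t\geq r\geq r_0}$ to a uniform $L^2(P_x)$-bound on the terminal values $F_{r,t}(S)$. Since $f$ is bounded and each $F_{r,t}$ is $P_x$-a.s.\ nondecreasing with $F_{r,t}(0)=0$ by Proposition~\ref{prop:FrtPCAF}, the pointwise inequality
\begin{align*}
0 \;\leq\; \int_0^S f(B_s)\,dF_{r,t}(s) \;\leq\; \|f\|_\infty\, F_{r,t}(S)
\end{align*}
reduces the claim to the statement
\begin{align*}
\sup_{t\geq r\geq r_0} E_x\bigl[F_{r,t}(S)^2\bigr] \;<\; \infty \qquad\text{for $\prob$-a.e.\ $\om$,}
\end{align*}
from which uniform $P_x$-integrability follows automatically.

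To obtain the second-moment bound I square the representation \eqv(lisa.F1), apply Fubini, and use Cauchy--Schwarz on the mixed moment $E_x[L_S^a L_S^b]\leq \sup_{c\in\bbR_+}E_x[(L_S^c)^2]$:
\begin{align*}
E_x\bigl[F_{r,t}(S)^2\bigr] \;=\; \int_{\bbR_+}\!\int_{\bbR_+} E_x\bigl[L_S^a L_S^b\bigr]\, M_{r,t}(da)\, M_{r,t}(db) \;\leq\; C(S)\, M_{r,t}(\bbR_+)^2,
\end{align*}
with $C(S) \ldef \sup_{x,a\in\bbR_+} E_x\bigl[(L_S^a)^2\bigr]$. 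The finiteness of $C(S)$ will come from the standard occupation-density formula
\begin{align*}
E_x\bigl[(L_S^a)^2\bigr] \;=\; 2\int_0^S\!\int_0^s p_u(x,a)\, p_{s-u}(a,a)\,du\,ds,
\end{align*}
valid for the reflected Brownian motion through \eqref{eq:def_L}, together with the uniform heat-kernel bound $p_s(y,z)\leq 2/\sqrt{2\pi s}$; this yields $C(S)\leq 4S$, since the inner integral $\int_0^s du/\sqrt{u(s-u)}=\pi$ is independent of $a$. This is exactly the type of estimate that is collected in the appendix on Brownian local times.

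Finally, because $\gamma(x_j(r))\in\bbR_+$ for every $j$, the total mass coincides with the derivative martingale of \eqref{Z.0}: $M_{r,t}(\bbR_+)=Z_t$. By Lemma~\ref{lem:Z}, $Z_t$ converges $\prob$-a.s.\ to a finite limit $Z$, and in particular $\sup_{t\geq 0} Z_t(\om)<\infty$ for $\prob$-a.e.\ $\om$; the value of $r_0(\om)$ will be chosen so that this holds and $r_0\geq\tau_0$ from Proposition~\ref{prop:FrtPCAF}. Combining,
\begin{align*}
\sup_{t\geq r\geq r_0} E_x\bigl[F_{r,t}(S)^2\bigr] \;\leq\; C(S)\,\Bigl(\sup_{t\geq 0} Z_t(\om)\Bigr)^2 \;<\; \infty \qquad \prob\text{-a.s.},
\end{align*}
which delivers the required uniform $L^2(P_x)$-bound and hence the uniform integrability. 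The only delicate point is the uniformity in $a$ (and in the starting point $x$) of the local-time second moment: the apparent $|a-x|$-dependence suggested by a direct Tanaka-formula approach disappears once one works through the symmetric heat-kernel representation above, where the integrable singularity of $p_u$ at $u=0$ absorbs all the $(x,a)$-dependence.
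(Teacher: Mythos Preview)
Your argument is correct. The route differs slightly from the paper's: you establish a uniform $L^2(P_x)$-bound via the second-moment formula $E_x[(L_S^a)^2]=2\int_0^S\!\int_0^s p_u(x,a)\,p_{s-u}(a,a)\,du\,ds$ and the bound $M_{r,t}(\bbR_+)=Z_t$, whereas the paper proceeds by a pointwise domination
\[
\int_0^S f(B_s)\,dF_{r,t}(s)\;\leq\;\|f\|_\infty\,|Z_t|\,\sup_{a\in\bbR_+}L_S^a\;\leq\;2\|f\|_\infty\,Z\,\sup_{a\in\bbR_+}L_S^a
\]
for $t\geq r_0$, and then invokes Lemma~\ref{lem:tail_L} to see that the single dominating variable lies in $L^1(P_x)$ (in fact in $L^2$). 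Both arguments rest on the same two ingredients --- the almost-sure boundedness of $(Z_t)$ from Lemma~\ref{lem:Z} and a moment bound on $\sup_a L_S^a$ --- but the paper's domination argument is a bit shorter since it avoids the explicit heat-kernel computation and yields uniform integrability directly from the de la Vall\'ee Poussin/domination criterion rather than via $L^2$. Your approach, on the other hand, is self-contained and does not rely on the tail estimate of Lemma~\ref{lem:tail_L}; the heat-kernel bound $C(S)\leq 4S$ you derive is exactly the kind of estimate that underlies that lemma.
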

\begin{proof}
 Recall that $\bbP$-a.s.\  $Z_t \rightarrow Z$ (cf.\ Lemma~\ref{lem:Z}), so for $\bbP$-a.e.\ $\om$ there exists $r_0=r_0(\om)$ such that $Z_t\leq 2Z$ for all $t\geq r_0$. 
 It suffices to prove that $\prob$-a.s.\ for any $x\in \bbR_+$,
 \begin{align} \label{eq:uiFrt}
  \sup_{t\geq r \geq r_0} E_x \left[  \Big| \int_0^S f(B_s) \, dF_{r,t}(s) \Big| \right] <\infty.
 \end{align}
 Note that
 \begin{align}
  \int_0^S f(B_s) \, dF_{r,t}(s)  = \sum_{j\leq n(t)} \big( \sqrt{2} t -x_j(t) \big) e^{\sqrt{2}(x_j(t)-\sqrt{2}t)} f(\gamma(x_j(r))) L_S^{\gamma(x_j(r))},
 \end{align}
so that
\begin{align}
  E_x \left[  \Big| \int_0^S f(B_s) \, dF_{r,t}(s) \Big| \right] \leq \|f \|_\infty \, |Z_t| \,  E_x\Big[\sup_{a\in \bbR_+}L_S^a\Big] \leq 2 \|f \|_\infty \, Z \, E_x\Big[ \sup_{a\in \bbR_+} L_S^a\Big],
\end{align}
and \eqref{eq:uiFrt} follows from Lemma~\ref{lem:tail_L}.
\end{proof}

\begin{proof}[Proof of Theorem~\ref{thm:constrF} (ii)] Recall that only the empty set is polar for $B$. In particular, the measure $M$ does trivially not charge polar sets, so by general theory (see e.g.\ \cite[Theorem 4.1.1]{CF12}) the PCAF with Revuz measure $M$ is (up to equivalence) unique. Thus, we need show that the limiting  functional $F$ is $\prob$-a.s.\ in Revuz correspondence with $M$.  In view of \eqref{eq:revuz_invariant} it suffices to prove that $\bbP$-a.s.
 \begin{align} \label{eq:identF}
  \int_{\bbR_+} f(a) \, M(da)
	=\int_{\bbR_+} E_x \Bigl[ \int_0^1 f(B_s) \, dF(s) \Bigr] \, dx
 \end{align}
 for any non-negative Borel function $f: \bbR_+ \rightarrow [0,\infty]$. By a monotone class argument it is enough to consider continuous functions $f$ with compact support in $\bbR_+$.
Note that $E_x[\int_0^1 f(B_s) \, dL^a_s]=f(a) E_x[L_1^a]$ for any $a\in \bbR_+$ and therefore
\begin{align} \label{eq:pre_limit}
 E_x \Big[ \int_0^1 f(B_s) \, dF_{r,t}(s) \Big]=\int_{\bbR_+} f(a) E_x[L_1^a] \, M_{r,t}(da).
\end{align}
By Lemma~\ref{lem:tail_L} we have $ \sup_{a\in \bbR_+} E_x[L_1^a]<\infty$  and together with Lemma~\ref{lem:propL} this implies that the mapping $a\mapsto f(a) E_x[L_1^a]$ is bounded and continuous on $\bbR_+$. Furthermore, by (i) $\prob$-a.s.\ the sequence $(dF_{r,t})$ converges weakly to $dF$ on $[0,1]$, $P_x$-a.s.\ for any $x\in\bbR_+$. 
We take limits in $t$ and $r$ on both sides of \eqref{eq:pre_limit}, where we use Lemma~\ref{lem:uiFrt} for the left hand side and the vague convergence of $M_{r,t}$ towards $M$ for the right hand side, and obtain
\begin{align}
 E_x \Big[ \int_0^1 f(B_s) \, dF(s) \Big]=\int_{\bbR_+} f(a) E_x[L_1^a] \, M(da).
\end{align}
Finally, by integrating both sides over $x\in \bbR_+$ and using Fubini's theorem and Lemma~\ref{lem:int_meanL} we get \eqref{eq:identF}.
\end{proof}


\subsection{First properties of $\cB$} \label{sec:properties} 

Recall that the  process $\cB$  is defined as the time-changed Brownian motion 
\begin{align}
 \cB(s) \ldef B_{F^{-1}(s)}, \qquad s\geq 0,
\end{align} 
where $F$ is the PCAF in \eqref{eq:def_F}.
First, we observe that the continuity of $F$ ensures that the process $\cB$ does not get stuck anywhere in the state space, and $\mathcal{B}$ does not explode in finite time since, $\prob \times P_x$-a.s., $\lim_{s\to \infty} F(s)=\infty$.
However,  $F$ is not strictly increasing so that jumps occur. 

More precisely, by the general theory of time
changes of Markov processes we have the following properties of $\cB$. 
First, in view of  \cite[Theorems~A.2.12]{FOT11} $\cB$ is a right-continuous strong Markov process on $\cX\ldef \supp M$ and by \cite[Proposition~A.3.8]{CF12}  we have $\prob$-a.s.\
\begin{align}
 P_x\big[\cB(s)\in \tilde\cX, \,  \forall s\geq 0 \big]=1, \qquad \forall x\in \cX,
\end{align}
where $\tilde\cX$ denotes the support of the PCAF $F$, i.e.\
\begin{align}
 \tilde \cX \ldef \big\{ x\in \bbR_+: P_x[R=0]=1 \big\} \quad \text{with} \quad R\ldef \inf\{ s>0: F_s>0\}.
\end{align}
By general theory (cf.\ \cite[Section 5.1]{FOT11}) we have $\tilde \cX \subseteq \cX$ (recall that only the empty set is polar) and $\cX \setminus \tilde \cX$ has $M$-measure zero. 

Furthermore,by \cite[Theorem~6.2.3]{FOT11} the process $\cB$ is recurrent and by \cite[Theorem~6.2.1 (i)]{FOT11} the transition function $(P_s)_{s>0}$  of $\cB$ given by
\begin{align}
P_s f(x):=E_x[f(\mathcal{B}(s))], \qquad s>0, \, x\in \cX, \, f\in L^2(\cX,M), 
\end{align}
 determines a strongly continuous semigroup and is $M$-symmetric, i.e.\ it satisfies
\begin{align}
 \int_{\cX} P_s f \cdot g \, dM = \int_{\cX} f \cdot P_s g \, dM 
\end{align}
for all Borel measurable functions $f,g:\ \cX \rightarrow [0,\infty]$.

\subsection{The Dirichlet form}
We can apply the general theory of Dirichlet forms to obtain a more precise
description of the Dirichlet form associated with $\mathcal{B}$.
For $D=(0,\infty)$ denote by $H^1(D)$ the standard Sobolev space, that is
\begin{align}
 H^1(D)=\big\{ f\in L^2(D, dx):\, f' \in L^2(D, dx) \big\},
\end{align}
where the derivatives are in the distributional sense.
On  $H^1(D)$ we define the form
\begin{align} \label{eq:def_dirform}
 \mathcal{E}(f,g)=\frac 1 2 \int_{\bbR_+}  f' \cdot  g' \, dx.
\end{align}
Recall that $(\mathcal{E}, H^1(D))$ can be regarded as a regular Dirichlet form on $L^2(\bbR_+)$ and the associated process is the reflected
Brownian motion $B$ on $\bbR_+$. By $H^1_e(\bbR_+)$ we denote the extended Dirichlet space,
that is the set of $dx$-equivalence classes of Borel measurable functions $f$
on $\bbR_+$ such that $\lim_{n\to \infty} f_n=f \in \bbR$ $dx$-a.e.\ for some
$(f_n)_{n\geq 1} \subset H^1(\bbR_+)$ satisfying
$\lim_{k,l \to \infty}\mathcal{E}(f_k-f_l, f_k-f_l)=0$.
By \cite[Theorem~2.2.13]{CF12} we have the following identification of $H^1_e(D)$:
\begin{align}
 H^1_e(D)=\big\{ f \in L^2_{\mathrm{loc}}(D, dx): f' \in L^2(D, dx) \big\}. 
\end{align}

Recall that $\cX$ denotes the support of the random measure $M$. We define the hitting distribution
\begin{align}
 H_{\cX} f(x) \ldef E_x\big[f(B_{\sigma_{\cX}})\big], \qquad x\in \bbR_+,
\end{align}
with $\sigma_{\cX}\ldef \inf\{t>0: B_t \in \cX \}$ for any non-negative Borel function $f$ on $\bbR_+$.
Note that the function $H_{\cX} f$ is uniquely determined by the restriction of $f$ to the
set $\cX$. Further, by \cite[Theorem 3.4.8]{CF12}, we have $H_{\cX}f \in H^1_e(D)$ and by \cite[Lemma 6.2.1]{FOT11} $H_{\cX}f=H_{\cX}g$ whenever $f=g$ $M$-a.e.\  for any $f,g\in H^1_e(D)$.
Therefore it makes sense to define the symmetric form $(\hat{ \mathcal{E}},  \hat{\mathcal{F}})$ on $L^2({\cX},M)$ by
\begin{align}
 \begin{cases}
   \hat{\mathcal{F}}  \ldef \big\{ \varphi \in L^2({\cX},M): \varphi= f \text{ $M$-a.e.\ for some  } f\in H^1_e(D) \big\}, \\
   \hat{ \mathcal{E}}(\varphi,\varphi)  \ldef \mathcal{E}(H_{\cX} f,H_{\cX} f), \qquad \varphi\in  \hat{\mathcal{F}}, \,  \varphi= f \text{ $M$-a.e., } f\in H^1_e(D).
 \end{cases}
\end{align}
By \cite[Theorem~6.2.1]{FOT11} $(\hat{ \mathcal{E}},  \hat{\mathcal{F}})$ is the regular Dirichlet form on $L^2({\cX};M)$ associated with the process $\mathcal{B}$. Since $\cX$ has Lebesgue measure zero, it follows from the Beurling-Deny representation formula for $\hat{ \mathcal{E}}$ (see  \cite[Theorem~5.5.9]{CF12}) that $\cB$ has no diffusive part and is therefore a pure jump process.

\section{Random walk approximations} \label{sec:rw}
\subsection{Approximation by a random walk on the leaves}
For any $0<r<t$ we define
\begin{align}
  \cB_{r,t}(s) \ldef B_{F_{r,t}^{-1}(s)}, \qquad s\geq 0,
\end{align}
where $F_{r,t}^{-1}$ denotes the right-continuous inverse of $F_{r,t}$.
The process $\cB_{r,t}$ is taking values in $\{ \gamma(x_j(r)), \, j\leq n(t)\}$ and it may therefore be regarded as a random walk on the leaves of the underlying Galton-Watson tree represented by their values under the embedding $\gamma$.

Let $D([0,\infty), \bbR_+)$ (or $D((0,\infty), \bbR_+)$, $D([0,S], \bbR_+)$) be the the space of $\bbR_+$-valued c\`adl\`ag paths on $[0,\infty)$ (or $(0,\infty)$, $[0,S]$). We denote by $d_{J_1}$ and $d_{M_1}$ the metric  w.r.t.\ Skorohod $J_1$- and $M_1$-topology, respectively. We refer to \cite[Chapter 3]{Wh02} for the precise definitions. Further, let
\begin{align}
 D_\uparrow ([0,\infty), \bbR_+) \ldef \big\{ w \in D([0,\infty), \bbR_+): \text{$w$ non-decreasing}, \, w(0)=0  \big\}.
 \end{align}
 Finally, we set
\begin{align}
L^1_{\mathrm{loc}} \ldef \Big\{ w\in D((0,\infty), \bbR_+): \int_0^S |w(s)| \, ds <\infty \quad \text{for all $S\geq 0$}  \Big\},
\end{align}
equipped with the topology induced by supposing 
\begin{align}
w_n \rightarrow w \quad \text{if and only if} \quad \int_0^S |w_n(s)-w(s)| \, ds \rightarrow 0 \quad \text{ for all $S\geq 0$.}
\end{align}
Note that the $L^1_{\mathrm{loc}}$-topology extends both the $J_1$- and the $M_1$-topology since it allows excursions in the approximating processes which are not present in the limit process provided they are of negligible $L^1$-magnitude (cf.\ \cite[Remark~1.3]{CM15}).

\begin{theorem} \label{thm:approx_proc}
$\prob$-a.s., for every $x\in \bbR_+$ we have  under $P_x$,
\begin{align}
 \lim_{r\uparrow \infty} \lim_{t \uparrow \infty} \cB_{r,t} = \cB
\end{align}
in distribution on $L^1_{\mathrm{loc}}$, that is, $\prob$-a.s., for every $x\in \bbR_+$ and 
for all bounded continuous functions $f$ on $L^1_{\mathrm{loc}}$,
\begin{align} \label{eq:convBrt}
  \lim_{r\uparrow \infty} \lim_{t \uparrow \infty}  E_x[f(\cB_{r,t})]  =  E_x[f(\cB)]. 
\end{align}
\end{theorem}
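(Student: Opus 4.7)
The plan is to fix $\om$ in a $\prob$-full measure event on which Theorem~\ref{thm:constrF}(i) holds. For every $x\in \bbR_+$, there is then a $P_x$-full measure set $\Lambda$ on which $F_{r,t}\to F$ in sup-norm on every $[0,S]$, on which the Brownian path $B$ is continuous, and on which the local time field $(s,a)\mapsto L_s^a$ is jointly continuous. All the $F_{r,t}$ and $F$ are continuous, non-decreasing, vanish at $0$, and $F(s)\to \infty$ as $s\to \infty$. It suffices to prove the stronger pathwise statement that on $\Lambda$ we have $\cB_{r,t}\to \cB$ in $L^1_{\mathrm{loc}}$ as $r,t\to \infty$, since almost sure convergence implies convergence in distribution.

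The first main step is to transfer the uniform convergence to the right-continuous inverses. A standard lemma for monotone functions (see e.g.\ \cite[Corollary~13.6.3]{Wh02}) shows that uniform convergence on compacts of continuous non-decreasing functions forces pointwise convergence of their right-continuous inverses at every continuity point of the limit inverse. The monotone function $F^{-1}$ has at most countably many discontinuities, which correspond precisely to the flat intervals of $F$ (these are disjoint subintervals of $[0,\infty)$). Hence $F_{r,t}^{-1}(s)\to F^{-1}(s)$ for Lebesgue-a.e.\ $s\geq 0$, and composing with the continuous path $B$ gives $\cB_{r,t}(s) = B(F_{r,t}^{-1}(s)) \to B(F^{-1}(s)) = \cB(s)$ for Lebesgue-a.e.\ $s$.

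For the second step, I would fix $S>0$, set $T\ldef F^{-1}(S)+1$ (which is finite since $F(s)\to\infty$), and note that $F(T)>S$ by the definition of the right-continuous inverse and continuity of $F$. Uniform convergence $F_{r,t}\to F$ on $[0,T]$ then gives $F_{r,t}(T)>S$ and therefore $F_{r,t}^{-1}(s)\leq T$ for all $s\in [0,S]$ and all $r,t$ large enough. Consequently $|\cB_{r,t}(s)|\leq \sup_{u\leq T} B(u)<\infty$ on $[0,S]$ for those $r,t$, and dominated convergence yields $\int_0^S |\cB_{r,t}(s)-\cB(s)|\, ds\to 0$. Since $S$ was arbitrary, this is convergence in $L^1_{\mathrm{loc}}$.

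The main obstacle is conceptual rather than technical: $F$ is constant on every excursion of $B$ away from $\supp M$, while $F_{r,t}$ is constant only on excursions away from the much finer atomic set $\{\gamma(x_j(r)):j\leq n(t)\}$; the two families of flat intervals therefore match only in a very weak sense, which forces $\cB$ to be a pure jump process while $\cB_{r,t}$ may perform short high-amplitude excursions within intervals on which $\cB$ is constant. This is precisely why the $J_1$- or $M_1$-topology would fail and why the $L^1_{\mathrm{loc}}$-topology is the correct framework: those potentially problematic excursions have vanishing $L^1$-magnitude and are absorbed by the dominated convergence step above, without any need for uniform control on their oscillation.
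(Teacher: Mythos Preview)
Your argument is correct and is in fact a direct, self-contained unpacking of what the paper does through its Lemma~\ref{lem:cadlag_paths}. The paper's proof routes the locally uniform convergence $F_{r,t}\to F$ through the $M_1$-topology and then invokes the continuity of the composition map $\pi:(a,w)\mapsto w\circ a^{-1}$ from $(D_\uparrow,d_{M_1})\times(D,d_{J_1})$ into $L^1_{\mathrm{loc}}$, citing \cite[Lemma~A.6]{CM15}. You instead prove exactly what that lemma encodes in this special case: pointwise convergence of $F_{r,t}^{-1}$ at continuity points of $F^{-1}$, composition with the continuous path $B$ to get Lebesgue-a.e.\ convergence of $\cB_{r,t}(s)$, and then a uniform bound $F_{r,t}^{-1}(s)\leq T$ on $[0,S]$ permitting dominated convergence. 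Your argument is pathwise and therefore slightly stronger than the distributional statement.

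Two minor points worth tightening. First, the limit is iterated ($\lim_r\lim_t$), and your phrasing ``for all $r,t$ large enough'' should be read accordingly: for $r$ large, $\lim_t F_{r,t}(T)>S$, and then for each such $r$, $F_{r,t}(T)>S$ for $t$ large depending on $r$. Second, the trade-off: the paper's modular formulation via Lemma~\ref{lem:cadlag_paths} pays off in Theorem~\ref{thm:rw_approx}(ii), where the approximating process is a rescaled random walk converging to $B$ in $J_1$ rather than $B$ itself; there the full continuity statement of $\pi$ in both variables is needed, whereas your direct argument uses only continuity in the first variable (with $w=B$ fixed) and would require an additional step to cover that case.
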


\begin{remark}
Since the measures $M_{r,t}$ and $M$ do not have full support and $F^{-1}_{r,t}$ and $F^{-1}$ have discontinuities, the locally uniform convergence of the functionals $F_{r,t}$ only implies the $M_1$-convergence of their inverses. In such a situation the composition mapping is only continuous in the $L^1_{\mathrm{loc}}$-topology (see Lemma~\ref{lem:cadlag_paths} below), which is why we  obtain the approximation in Theorem~\ref{thm:approx_proc} in the coarser $L^1_{\mathrm{loc}}$-topology only. We refer to \cite[Corollary~1.5 (b)]{CHK16} for a similar result and to \cite{CM15, FM14, MM15} for examples of convergence results for trap models in the $L^1_{\mathrm{loc}}$-topology (or slight modifications of it).   
\end{remark}

Before we prove Theorem~\ref{thm:approx_proc} we recall some facts about the continuity of the inverse and the composition mapping on the space of c\`adl\`ag paths. 

\begin{lemma} \label{lem:cadlag_paths}
 \begin{enumerate}
  \item [(i)] For any $w_1, w_2 \in D([0,S], \bbR_+)$,
  \begin{align} \label{eq:comp_metric}
   d_{M_1}(w_1,w_2) \leq  d_{J_1}(w_1,w_2) \leq \sup_{s\in [0,S]} |w_1(s) - w_2(s)|.
  \end{align}
  \item[(ii)] Let $(a_n)$ be a sequence in  $D_\uparrow([0,\infty), \bbR_+)$ such that $a_n \rightarrow a$ in $M_1$-topology for some $a\in  D_\uparrow([0,\infty), \bbR_+)$.  Then, $a_n^{-1} \rightarrow a^{-1}$  in $D((0,\infty),\bbR_+)$ equipped with $M_1$-topology, where $a_n^{-1}$ and $a^{-1}$ denote the right-continuous inverses of $a_n$ and $a$, respectively. 
 \item[(iii)]   Let $(a_n) \subset D_\uparrow([0,\infty), \bbR_+)$ and $(w_n) \subset D([0,\infty), \bbR_+)$ such that $a_n \rightarrow a$ in $M_1$-topology for some $a\in  D_\uparrow([0,\infty), \bbR_+)$ and $w_n\rightarrow w$ in $J_1$-topology for some $w \in C([0,\infty), \bbR_+)$. Then, $w_n\circ a_n \rightarrow w \circ a$ in $L^1_{\mathrm{loc}}$-topology.
 \end{enumerate}
\end{lemma}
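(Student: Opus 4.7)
The plan is to establish the three parts of Lemma~\ref{lem:cadlag_paths} in sequence, relying on standard structural facts about the Skorohod $J_1$- and $M_1$-topologies from \cite{Wh02}.

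For part (i), both inequalities follow directly from the definitions. The $J_1$-metric is the infimum over strictly increasing continuous bijections $\la:[0,S]\to[0,S]$ of $\max(\|\la-\mathrm{id}\|_\infty,\|w_1-w_2\circ\la\|_\infty)$; choosing $\la=\mathrm{id}$ immediately yields $d_{J_1}(w_1,w_2)\leq \sup_{s\in[0,S]}|w_1(s)-w_2(s)|$. For $d_{M_1}\leq d_{J_1}$, any $J_1$-admissible time change lifts to a parametric representation of the completed graph in the sense of $M_1$ whose distance is dominated by the corresponding $J_1$-distance, so the infimum defining $d_{M_1}$ is taken over a strictly larger class of parametrizations.

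Part (ii) is a classical continuity property of the $M_1$-topology (cf.\ \cite[Corollary~13.6.4]{Wh02}): the completed graph of the right-continuous inverse $a^{-1}$ is the reflection across the diagonal of that of $a$, and this reflection preserves the Hausdorff-type uniformity underlying $M_1$; hence every parametric representation of $a$ yields one of $a^{-1}$ of comparable quality. Taking the limit in $D((0,\infty),\bbR_+)$ rather than on $[0,\infty)$ avoids possible blow-up of $a_n^{-1}$ near $0$ caused by an initial flat segment of $a$; on every compact subinterval $[\varepsilon,S]\subset(0,\infty)$ the convergence is then genuine.

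For part (iii) the key is that $w$ is continuous: $J_1$-convergence to a continuous function is equivalent to locally uniform convergence, hence $w_n\to w$ uniformly on every compact $K\subset\bbR_+$. Fix $S>0$; the $M_1$-convergence of the monotone $a_n$ implies $\sup_n\sup_{s\in[0,S]}a_n(s)<\infty$, so there is a compact $K\subset\bbR_+$ containing the ranges of $a|_{[0,S]}$ and all $a_n|_{[0,S]}$. Then
\begin{align}
\int_0^S \bigl|w_n(a_n(s))-w(a(s))\bigr|\,ds \;\leq\; S\sup_{x\in K}|w_n(x)-w(x)| + \int_0^S \bigl|w(a_n(s))-w(a(s))\bigr|\,ds.
\end{align}
The first term vanishes by uniform convergence on $K$. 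For the second, $M_1$-convergence of monotone paths gives $a_n(s)\to a(s)$ at every continuity point of $a$, hence $ds$-a.e.\ since the monotone function $a$ has at most countably many discontinuities; continuity of $w$ then yields $w(a_n(s))\to w(a(s))$ $ds$-a.e., and dominated convergence with bound $2\sup_K|w|$ finishes the argument. The main subtlety is exactly here: $w_n\circ a_n$ need not converge in $J_1$ or even in $M_1$, because short flat pieces of $a_n$ that shrink in the limit can create brief excursions of $w_n\circ a_n$ that vanish in $L^1$ but not in $\sup$-norm, and working in $L^1_{\mathrm{loc}}$ is precisely what absorbs them.
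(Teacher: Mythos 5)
Your proof is correct, and for parts (i) and (ii) it takes essentially the same route as the paper, which simply invokes the standard facts from Whitt (Theorem 12.3.2 for $d_{M_1}\leq d_{J_1}$, the definition of the $J_1$-metric for the second inequality, and the continuity of the inverse map on $D((0,\infty),\bbR_+)$ for (ii)); your sketches of the graph-lifting and graph-reflection arguments are accurate, including the point that working on $(0,\infty)$ sidesteps the behaviour of $a_n^{-1}$ at the origin caused by an initial flat piece of $a$. The genuine difference is in part (iii): the paper gives no argument at all and defers entirely to \cite[Lemma~A.6]{CM15}, whereas you supply a self-contained proof. Your argument there is sound and uses exactly the right ingredients: $J_1$-convergence to a continuous limit upgrades to locally uniform convergence; $M_1$-convergence of monotone functions to a monotone limit gives pointwise convergence at continuity points of the limit, hence $ds$-a.e.; the ranges over $[0,S]$ are uniformly bounded because $a_n(S)\leq a_n(S')\to a(S')$ for a continuity point $S'\geq S$; and the triangle-inequality split together with dominated convergence (bounded by $2\sup_K|w|$) closes the estimate. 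Your closing observation about why the composition is not continuous into $J_1$ or $M_1$ but the spurious excursions are absorbed in $L^1_{\mathrm{loc}}$ is precisely the phenomenon the paper records in the remark following its approximation theorem. What your direct proof buys is independence from the external reference; what the paper's citation buys is brevity. No gaps.
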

\begin{proof}
 For the first inequality in \eqref{eq:comp_metric} we refer to \cite[Theorem 12.3.2]{Wh02} and the second inequality is immediate from the definition of the $J_1$-metric. Statement (ii) follows from the continuity of the inverse mapping in $D((0,\infty), \bbR_+)$, see \cite[Corollary 13.6.5]{Wh02}. 
For (iii) see \cite[Lemma~A.6]{CM15}.
\end{proof}

\begin{proof}[Proof of Theorem~\ref{thm:approx_proc}]
Fix an environment $\om \in \Omega$ such that Theorem~\ref{thm:constrF} holds giving that for any $x\in \bbR_+$, $P_x$-a.s., $F_{r,t} \rightarrow F$ locally uniformly  as first $t\uparrow \infty$ and then $r\uparrow \infty$. In particular, using Lemma~\ref{lem:cadlag_paths} (i) we have that $F_{r,t} \rightarrow F$ in $M_1$-topology $P_x$-a.s. In particular, for all bounded $\varphi$ acting on $D([0,\infty), \bbR_+)$ which are continuous in $M_1$-topology on a set with full $P_x$-measure,
\begin{align} \label{eq:weak_convF}
  \lim_{r\uparrow \infty} \lim_{t \uparrow \infty} E_x\big[\varphi(F_{r,t})\big]  = E_x\big[\varphi(F)\big]. 
\end{align}
Now, observe that for any bounded continuous $f$ on  $L^1_{\mathrm{loc}}$,
 \begin{align}
 E_x\big[f(\cB_{r,t}) - f(\cB)\big]= E_x\big[f\circ \pi (F_{r,t}, B) - f\circ \pi (F,B)\big],
 \end{align}
 where 
\begin{align}
\pi: \big(D_\uparrow([0,\infty), \bbR_+), d_{M_1} \big) \times 
\big(D([0,\infty), \bbR_+), d_{J_1} \big) \rightarrow L^1_{\mathrm{loc}} \quad
 (a,w) \mapsto w\circ a^{-1}.
\end{align} 
 Thus  Lemma~\ref{lem:cadlag_paths} (ii) and (iii) ensure  the continuity of the mapping $\pi$ in $M_1$-topology on a set with full $P_x$-measure. Hence, \eqref{eq:convBrt} follows from \eqref{eq:weak_convF}.
\end{proof}

\begin{remark}
In the special case $x=0$  the  convergence result in Theorem~\ref{thm:approx_proc} can be extended to $D([0,\infty),\bbR_+)$ equipped with $L^1_{\mathrm{loc}}$-topology. This is because the continuity of the inverse map stated in Lemma~\ref{lem:cadlag_paths}(ii) also holds in $D([0,\infty),\bbR_+)$ under the additional assumption that $a^{-1}(0)=0$ (cf.\ \cite[Chapter 13.6]{Wh02}). 
Note that by construction the origin is contained in $\cX$ so that $F^{-1}(0)=0$ under $P_0$. However, an arbitrary $x>0$ might not be contained in the support $\cX$ of the random measure $M$, in which case $F^{-1}(0)=0$ does not hold. 
\end{remark}

\subsection{Approximation by  random walks on a lattice}

Next we provide approximation results for $\mathcal{B}$ in terms of  random walks on the lattice $(\frac 1 r \bbZ_+)$, $r>0$.
For any $0<r<t$ let $\tilde M_{r,t}$ be the random measure
\begin{align}
\tilde M_{r,t} \ldef \sum_{k=0}^\infty \delta_{\frac  k r} \sum_{j\leq n(t)} \big(\sqrt{2} t -x_j(t)\big) \, e^{\sqrt{2} (x_j(t)-\sqrt{2}t)} \, \indicator_{\big\{ \gamma(x_j(t) \in \left[\frac k r, \frac{k+1}{r}\right)\big\}}
\end{align}
with the associated PCAF $\tilde F_{r,t}: [0,\infty)\rightarrow [0,\infty)$ given by
\begin{align}
\tilde F_{r,t}(s) &\ldef \int_{\bbR_+} L_s^a \, \tilde M_{r,t}(da) \nonumber \\
& =  \sum_{k=0}^\infty L_s^{\frac k r} \,  \sum_{j\leq n(t)} \big(\sqrt{2} t -x_j(t)\big) \, e^{\sqrt{2} (x_j(t)-\sqrt{2}t)} \, \indicator_{\big\{ \gamma(x_j(t)) \in \left[\frac k r, \frac{k+1}{r}\right)\big\}}.
\end{align}
 Then $\tilde \cB_{r,t}(s):= B_{\tilde F_{r,t}^{-1}(s)}$, $s\geq 0$, defines a random walk on $(\frac 1 r \bbZ_+)$. Further, let $P_0^{\mathrm{rw}}$ be the probability measure on $D([0,\infty), \mathbb{R}_+)$, under which the coordinate process  $(X_s)_{s\geq 0}$ is a simple random walk on $\mathbb{Z}_+$ in continuous time with independent $\exp(1)$-distributed holding times. Define $X_{r,t}(s)\ldef \frac 1 r  X_{r^2 \tilde F_{r,t}^{-1}(s)}$, $s\geq 0$.

 \begin{theorem} \label{thm:rw_approx}
(i) For every $x\in \bbR_+$, under the annealed law $\int P_x() \, d\prob$,
\begin{align}
 \lim_{r\uparrow \infty} \lim_{t \uparrow \infty} \tilde\cB_{r,t} = \cB, \qquad \text{in distribution on $L^1_{\mathrm{loc}}$,}
 \end{align}
  that is for all bounded continuous functions $f$ on $L^1_{\mathrm{loc}}$ we have
\begin{align} \label{eq:convBrt2}
  \lim_{r\uparrow \infty} \lim_{t \uparrow \infty} \bbE\big[ E_x[f(\tilde \cB_{r,t})] \big] = \bbE\big[ E_x[f(\cB)] \big]. 
\end{align}

(ii) Under $\int P_0^{\mathrm{rw}}() \, d\prob$, $ \lim_{r\uparrow \infty} \lim_{t \uparrow \infty} X_{r,t} = \cB$
in distribution on $L^1_{\mathrm{loc}}$, that is for all bounded continuous functions $f$ on $L^1_{\mathrm{loc}}$,
\begin{align} 
  \lim_{r\uparrow \infty} \lim_{t \uparrow \infty}  E_0^{\mathrm{rw}}\big[f(X_{r,t})\big]  =  E_0\big[f(\cB)\big]. 
\end{align}
 \end{theorem}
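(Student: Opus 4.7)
For part (i), my plan is to mimic the proof of Theorem~\ref{thm:approx_proc} by replacing $M_{r,t}$ and $F_{r,t}$ with their lattice-discretised counterparts $\tilde M_{r,t}$ and $\tilde F_{r,t}$. First I would establish $\prob$-a.s.\ vague convergence $\tilde M_{r,t}\to M$ on $\bbR_+$ as $t\uparrow\infty$ and then $r\uparrow\infty$; next, using the joint continuity of Brownian local times (Lemma~\ref{lem:propL}), this upgrades to $P_x$-a.s.\ locally uniform convergence $\tilde F_{r,t}\to F$; then Lemma~\ref{lem:cadlag_paths}(ii)--(iii) yields $\tilde\cB_{r,t}=B\circ\tilde F_{r,t}^{-1}\to B\circ F^{-1}=\cB$ in $L^1_{\mathrm{loc}}$; finally bounded convergence over $\prob\times P_x$ gives the annealed statement. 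The only new ingredient is the vague convergence of $\tilde M_{r,t}$, for which I would compare directly with $M_{r,t}$: for a Lipschitz test function $f$ with constant $L$ and compact support,
\begin{align*}
\Bigl|\int f\,d\tilde M_{r,t}-\int f\,dM_{r,t}\Bigr| \leq L \sum_{j\leq n(t)} w_j^{(t)} \bigl(1/r + |\gamma(x_j(t))-\gamma(x_j(r))|\bigr),
\end{align*}
with $w_j^{(t)}=(\sqrt 2 t-x_j(t))e^{\sqrt 2(x_j(t)-\sqrt 2 t)}\geq 0$ eventually (Lemma~\ref{lem:Z}). The $L/r$-contribution vanishes since $\sum_j w_j^{(t)}=Z_t\leq 2Z$ for $t$ large; the second piece I would control using the representation $\gamma(x_j(t))-\gamma(x_j(r))=\sum_{l:r<t_l\leq t}u_l^j\,e^{-t_l}$ and the fact that contributions after time $r$ carry factors $e^{-t_l}=O(e^{-r})$, so a first- or second-moment estimate against the weights yields convergence to zero. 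Combined with the vague convergence $M_{r,t}\to M$ from \cite{BH14}, this gives the claim.

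For part (ii), the plan is to combine (i) with an invariance principle for the continuous-time simple random walk on $\bbZ_+$. Under $P_0^{\mathrm{rw}}$ the rescaled walk $\bar X^{(r)}_s:=r^{-1}X_{r^2 s}$ converges in distribution on $C([0,\infty),\bbR_+)$ to the reflected Brownian motion $B$ by Donsker's theorem, and the invariance principle for local times of simple random walks (via Ray--Knight or Kesten-type results) gives joint convergence of $\bar X^{(r)}$ together with its rescaled occupation times towards $(B,L(B))$. Since $X$ is independent of the branching Brownian motion, by Skorohod representation I may couple everything on a common probability space so that simultaneously (a) the convergence from part (i) (locally uniform $\tilde F_{r,t}\to F$, hence $M_1$-convergence $\tilde F_{r,t}^{-1}\to F^{-1}$) and (b) Donsker's convergence $\bar X^{(r)}\to B$ in $J_1$ both hold almost surely. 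The identification $X_{r,t}(s)=\bar X^{(r)}(\tilde F_{r,t}^{-1}(s))$ together with Lemma~\ref{lem:cadlag_paths}(iii) then yields $X_{r,t}\to\cB$ in $L^1_{\mathrm{loc}}$, and bounded convergence in both the walk and the environment gives the annealed statement.

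The hard part will be the uniform estimate on $\sum_j w_j^{(t)}|\gamma(x_j(t))-\gamma(x_j(r))|$ in part (i): one must show that the embedded positions of descendants at time $t$ are collectively close to those of their $r$-ancestors when weighted by the derivative martingale, despite the exponentially many particles. A naive union bound fails; one has to exploit that the weights $w_j^{(t)}$ themselves concentrate on a thin set while the lineage-wise displacements decay like $e^{-r}$, and it may be necessary to invoke the uniform integrability provided by Lemma~\ref{lem:uiFrt}. For part (ii), a secondary subtlety is the matching of scalings: the clock $\tilde F_{r,t}^{-1}$ as written is built from local times of $B$, while $X_{r,t}$ is a time change of $X$; the factor $r^2$ in $r^2\tilde F_{r,t}^{-1}(s)$ is precisely what is needed so that occupation times of $X$ at sites $k/r$ match $r L_s^{k/r}(B)$ in the scaling limit, and this compatibility has to be verified carefully as part of the Donsker-with-local-times step.
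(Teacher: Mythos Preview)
Your overall strategy for (i) --- compare $\tilde F_{r,t}$ to $F_{r,t}$ and then invoke Lemma~\ref{lem:cadlag_paths} --- matches the paper's, but the route to the key estimate and the mode of convergence are different.

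The paper does \emph{not} establish $\prob$-a.s.\ vague convergence $\tilde M_{r,t}\to M$, nor $\prob$-a.s.\ locally uniform convergence $\tilde F_{r,t}\to F$. It proves the latter only in $\prob\times P_x$-\emph{probability} (Proposition~\ref{prop:tildeF}); this is exactly why Theorem~\ref{thm:rw_approx}(i) is stated as an annealed result while Theorem~\ref{thm:approx_proc} is quenched. The step you isolate as hard --- controlling $\sum_j w_j^{(t)}|\gamma(x_j(t))-\gamma(x_j(r))|$ --- is handled in the paper not by a moment bound but by a \emph{thinning} device: one introduces $\Delta_{r,t}^k=\{|\gamma(x_k(t))-\gamma(x_k(r))|\le e^{-r/2}\}$ and shows (Lemma~\ref{lem:compZ}) that the thinned mass $Z_{r,t}^\gamma=\sum_k w_k^{(t)}\indicator_{\Delta_{r,t}^k}$ is close to $Z_t$ in $\prob$-probability. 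The proof of Lemma~\ref{lem:compZ} is indirect: it compares Laplace functionals of the extremal point process with and without the thinning indicator and uses \cite[Lemma~4.2]{BH14} to control the probability of the bad event. Once thinning is in place (Lemma~\ref{lem:compF}), the comparison of the thinned functionals $\tilde F_{r,t}^\gamma$ and $F_{r,t}^\gamma$ is immediate from the H\"older continuity of local times (Lemma~\ref{lem:propL}(ii)), since on $\Delta_{r,t}^j\cap\{\gamma(x_j(t))\in[k/r,(k+1)/r)\}$ the atoms $\gamma(x_j(r))$ and $k/r$ differ by at most $1/r+e^{-r/2}$.

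Your direct approach, if it worked, would yield the stronger $\prob$-a.s.\ statement and hence a quenched version of (i). But the sketch you give (``contributions after time $r$ carry factors $e^{-t_l}=O(e^{-r})$, so a first- or second-moment estimate \dots'') does not obviously go through: the weights $w_j^{(t)}$ and the tree displacements $\sum_{l:t_l>r}u_l^j e^{-t_l}$ are correlated through the genealogy, and a naive moment computation does not decouple them. The paper's detour via the extremal process sidesteps this entirely. If you wish to keep your direct route, note that the thinning idea still helps: split over $\Delta_{r,t}^j$ and its complement; the first piece is at most $e^{-r/2}Z_t$, and for the second you are back to needing $|Z_t-Z_{r,t}^\gamma|\to 0$, i.e.\ Lemma~\ref{lem:compZ}.

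For (ii), the paper's proof is a one-line reduction to Proposition~\ref{prop:tildeF} together with Donsker's theorem for the rescaled walk, exactly along your lines. Your remark that the clock $\tilde F_{r,t}$ is built from local times of $B$ rather than of $X$ is well taken; the paper glosses over this, and your proposed fix via a joint invariance principle for the walk together with its local times, plus a Skorohod coupling, is the natural way to make the composition step rigorous.
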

\begin{remark}
The proof of Theorem~\ref{thm:rw_approx} relies on the locally uniform convergence of $\tilde F_{r,t}$ towards $F$ in $\prob \times P_x$-probability, see Proposition~\ref{prop:tildeF} below. Similarly, by using Theorem~\ref{thm:constrF} instead, one can show that  $\prob$-a.s., under  $P_0^{\mathrm{rw}}$, the processes
$\big( \frac 1 r  X_{r^2 F_{r,t}^{-1}(s)}\big)_{s\geq 0}$ converge towards $\cB$
in distribution on $L^1_{\mathrm{loc}}$.
\end{remark} 
The proof of Theorem~\ref{thm:rw_approx} requires some preparations.  For $0\leq r<t<\infty$ set
\begin{align}
 Z_{r,t}^\gamma \ldef \sum_{k=1}^{n(t)} \big( \sqrt{2} t -x_k(t) \big)\,  e^{\sqrt{2}(x_k(t)-\sqrt{2}t)} \, \indicator_{\Delta_{r,t}^k},
\end{align}
where $\Delta^k_{r,t}\ldef \{\vert\gamma(x_k(t)-\gamma(x_k(r))\vert \leq e^{-r/2}\}$.
Next we show that this thinned $Z_{r,t}^\gamma $, which only keeps track of particles whose values under $\gamma$ do not change much over time, is close to the original measure $Z_{r,t}$ in probability.
\begin{lemma} \label{lem:compZ}
For any $\varepsilon, \delta>0$ there exist $r_0=r_0(\varepsilon)$ and $t_0=t_0(\varepsilon)$ such that for any $r>r_0$ and $t>3r \vee t_0$,
 \begin{align}
 \bbP\big[\vert Z_t-Z_{r,t}^{\gamma}\vert>\delta\big] <\varepsilon.
 \end{align}
\end{lemma}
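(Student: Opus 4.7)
The plan is to condition on $\cF_r$, the $\sigma$-algebra of the BBM up to time $r$, and exploit the conditional independence of the sub-BBMs rooted at the time-$r$ ancestors $x_i(r)$. Writing
\begin{align*}
Z_t - Z_{r,t}^\gamma \;=\; \sum_{i\le n(r)} W^{(i)}, \quad W^{(i)} \ldef \sum_{k\,\text{desc.\ of}\,i}(\sqrt 2 t - x_k(t))\,e^{\sqrt 2(x_k(t)-\sqrt 2 t)}\indicator_{(\Delta^k_{r,t})^c},
\end{align*}
the critical structural observation is that $(\Delta^k_{r,t})^c$ depends only on the tree structure (branching times and offspring labels) of the sub-BBM on the ancestral line to $k$, hence is independent of the sub-BBM's Brownian increments. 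I first restrict to a good event $G=\{\min_j(\sqrt 2 t - x_j(t))\ge 0\}\cap\{|Z_r|\le R\}$; Lemma~\ref{lem:Z} together with the a.s.\ convergence $Z_r\to Z$ ensure $\bbP(G^c)\le\varepsilon/2$ for $t\ge t_0(\varepsilon)$ and a suitable $R=R(\varepsilon)$, and on $G$ the difference $Z_t-Z_{r,t}^\gamma$ is non-negative.

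The key uniform-in-$t$ input is a spinal bound. Via many-to-one with the size-biased offspring distribution $\tilde p_k=kp_k/2$, the normalised $\gamma$-shift along the spine of a sub-BBM of duration $s=t-r$ is $U\ldef\sum_j u_j e^{-(t_j-r)}$, where $(t_j)$ is a rate-one Poisson process and the labels $u_j$ have mean $K/4$. Since $\bbE[\sum_j e^{-(t_j-r)}]\le\int_0^\infty e^{-u}\,du=1$ uniformly in $s$, Markov's inequality gives the crucial bound $p_r\ldef P(U>e^{r/2})\le (K/4)\,e^{-r/2}$, uniformly in $t$. Applying many-to-one in each sub-BBM combined with the change of measure $dQ/dP = e^{\sqrt 2 B_s - s}$ (so that $B_s-\sqrt 2 s$ is standard Brownian motion, independent of the spine tree structure), direct computation yields
\begin{align*}
\bbE[W^{(i)}\mid\cF_r] \;=\; (\sqrt 2\,r - x_i(r))\,e^{\sqrt 2(x_i(r)-\sqrt 2\,r)}\cdot p_r,
\end{align*}
hence $\bbE[Z_t-Z_{r,t}^\gamma\mid\cF_r] = p_r\,Z_r$, which on $G$ is bounded by $(KR/4)\,e^{-r/2}\le\delta/2$ for $r$ large.

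To pass from conditional mean control to a probability bound, I would use Chebyshev's inequality, decomposing $\bbE[(Z_t-Z_{r,t}^\gamma)^2]=\bbE[\mathrm{Var}(Z_t-Z_{r,t}^\gamma\mid\cF_r)]+\bbE[(p_r Z_r)^2]$. The second term is $O(p_r^2\cdot r)=o(1)$ as $r\to\infty$, using the polynomial growth of $\bbE[Z_r^2]$ (possibly after truncating $Z_r$ to a modified derivative martingale with finite $L^2$ norm). For the conditional variance, conditional independence reduces it to $\sum_i\mathrm{Var}(W^{(i)}\mid\cF_r)$, and each summand is controlled via a many-to-two calculation on the sub-BBM: for pairs of descendants splitting at time $\tau$, the indicator $\indicator_{(\Delta^k)^c}\indicator_{(\Delta^{k'})^c}$ contributes an extra factor of order $p_r$ coming from the shared ancestral portion, and the large exponential factors appearing in the second moments of additive-martingale-like pieces cancel against the $e^{-4t+2\sqrt 2 x_i(r)}$ prefactor through the same derivative-martingale-type cancellations that keep $\bbE[Z_s^2]$ polynomially bounded in $s$. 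The main obstacle is precisely obtaining this uniformity in $t$, since the naive first-moment bound $\bbE[(Z_t-Z_{r,t}^\gamma)^+]\le C\sqrt t\cdot p_r$ diverges with $t$; only by working conditionally on $\cF_r$ and carefully exploiting the derivative-martingale cancellations can one absorb the residual polynomial-in-$r$ terms into the $e^{-r/2}$ decay of $p_r$, which is what the assumption $t>3r$ is designed to permit.
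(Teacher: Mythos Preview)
Your conditional first-moment computation is fine, but the variance step has a genuine gap: the claim that ``$\bbE[Z_s^2]$ is polynomially bounded in $s$'' is false at criticality. Already the diagonal part gives, by many-to-one and a Girsanov shift,
\begin{align*}
\bbE\Big[\sum_{k\le n(s)}(\sqrt 2 s-x_k(s))^2 e^{2\sqrt 2(x_k(s)-\sqrt 2 s)}\Big]
= e^{s}\,\bbE\big[(\sqrt 2 s-B_s)^2 e^{2\sqrt 2(B_s-\sqrt 2 s)}\big]
= e^{s}(s+2s^2),
\end{align*}
so the derivative martingale is not in $L^2$. The same blow-up hits each $W^{(i)}$: the thinned additive piece inside the sub-BBM of duration $t-r$ has a diagonal second moment of order $e^{t-r}p_r$, and after multiplying by $e^{2\sqrt 2(x_i(r)-\sqrt 2 r)}$ and summing over $i$ one obtains a bound of order $e^{t-r/2}$, which the decay $p_r\asymp e^{-r/2}$ cannot compensate for any choice of $r_0,t_0$. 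Truncating $Z_r$ does not help, since the divergence comes from the $[r,t]$ increment, and the many-to-two cross terms only make things worse. There are no ``derivative-martingale-type cancellations'' for second moments at criticality; Chebyshev is simply unavailable here.

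The paper takes a completely different route that sidesteps moments. It compares Laplace functionals of the (future) extremal process: from \cite{ABK_P} one has, for $\phi=\indicator_{[\underline A,\overline A]}$, that $\lim_{u\to\infty}\bbE[\exp(-\sum_i\phi(x_i(u)-m(u)))\,|\,\cF_t]$ is expressed through $Z_t$, and the same limit with the indicator $\indicator_{\Delta^i_{r,t}}$ inserted is expressed through $Z_{r,t}^\gamma$. These two Laplace functionals are then sandwiched, the discrepancy being at most $\bbP[(\cA_{r,t,u}(\underline A))^c]$, the probability that \emph{some extremal particle} at time $u$ has a large $\gamma$-shift on $[r,t]$; this is small uniformly in $u$ by the genealogical localisation of extremal particles in \cite[Lemma~4.2]{BH14}. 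Closeness of the Laplace transforms then yields closeness of $Z_t$ and $Z_{r,t}^\gamma$ in probability via the continuous mapping theorem. The key conceptual point is that the relevant control is on extremal particles (where such localisation is known), not on the full martingale-weighted sum.
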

\begin{proof}
 For $d\in\bbR$ and $0\leq r<t\leq u<\infty$ we define the event
 \begin{align} \label{eq:defB}
  \cA_{r,t,u}(d) & \ldef \big\{ \forall k  \leq n(u) \text{ with } x_k(u)-m(u) > d: |\gamma(x_k(t))-\gamma(x_k(r)) | \leq e^{-r/2} \big\}.
 \end{align}
Let $\mathcal{F}_t\ldef \sigma\big\{(x_k(s))_{1\leq k \leq n(s)}, s\leq t \big\}$ and for  $\underline{A}, \overline{A}\in \mathbb{R}$ with $\underline{A}< \overline{A}$ we set $\phi(x)\ldef\indicator_{[\underline{A},\overline{A}]}(x)$. We observe that for any $t>0$ the martingale $Z_t$ appeared  in \cite{ABK_P} (see Eq.\ (3.25) therein) in the $\prob$-a.s.\ limit of
\begin{align}\label{eq:ulimit0}
& \lim_{u\uparrow \infty}\bbE\Bigg[\bbE\bigg[\exp\!\Big(-\sum_{i=1}^{n(u)}\phi\big(x_i(u)-m(u)\big) \Big) \, \Big\vert \mathcal{F}_t\bigg]\Bigg] \nonumber \\
& \mspace{36mu} = c_t \, \bbE\bigg[\exp\!\Big(-C  \big(e^{-\sqrt{2}\underline{A}}-e^{-\sqrt{2}\,\overline{A}}\big) Z_{t}\Big)\bigg],
\end{align}
where $\lim_{t\uparrow \infty} c_t=1$ and $C$ is the same constant as in \eqref{extremal.1.1}. Similarly, for any $0<r<t$ we can consider
\begin{eqnarray}\label{neu.1}
\lim_{u\uparrow\infty}\bbE\bigg[\exp\! \Big( -\sum_{i=1}^{n(u)}\indicator_{\Delta^i_{r,t}} \, \phi\big(x_i(u)-m(u)\big)\Big) \bigg]. 
\end{eqnarray}
Note that $\indicator_{\Delta^i_{r,t}}$ is measurable with respect to $\mathcal{F}_t$.  Then, the limit in \eqref{neu.1} can be treated similarly as the one in \cite[Eq.\ (3.17)]{ABK_P}. More precisely, by repeating the analysis therein (where the sum in the analogue to \cite[Eq.\ (3.19)]{ABK_P} runs over particles with $|\gamma(x_i(t))-\gamma(x_i(r)) | \leq e^{-r/2}$ only)  we obtain
\begin{align} \label{eq:ulimit}
&\lim_{u\uparrow \infty}\bbE\Bigg[ \bbE\bigg[ \exp\! \Big(-\sum_{i=1}^{n(u)}\indicator_{\Delta^i_{r,t}} \phi\big(x_i(u)-m(u)\big)\Big) \, \Big|  \mathcal{F}_t \bigg] \Bigg] \nonumber\\
& \mspace{36mu} 
=  \, c'_t \, \bbE\left[ \exp\!\Big(-C  \big(e^{-\sqrt{2}\underline{A}}-e^{-\sqrt{2}\,\overline{A}}\big) Z_{r,t}^\gamma\Big)\right],
\end{align}
where $\lim_{t\uparrow \infty} c'_t=1$. Moreover, the expectations in \eqref{eq:ulimit0} and \eqref{eq:ulimit}  can be related as follows,
\begin{align}\label{Z.200}
&  \bbE\bigg[\exp\!\Big(-\sum_{i=1}^{n(u)}\indicator_{\Delta^i_{r,t}} \phi\big(x_i(u)-m(u)\big)\Big)  \bigg]\nonumber\\
\geq& \,  \bbE\bigg[\exp\!\Big(-\sum_{i=1}^{n(u)}\phi(x_i(u)-m(u)) \Big)  \bigg]
\geq \,  \bbE \bigg[\exp\! \Big( -\sum_{i=1}^{n(u)} \phi\big(x_i(u)-m(u)\big)\Big) \indicator_{ \cA_{r,t,u}(\underline{A})}  \bigg] \nonumber\\
= & \, \bbE\bigg[\exp\!\Big(-\sum_{i=1}^{n(u)}\indicator_{\Delta^i_{r,t}} \phi\big(x_i(u)-m(u)\big)\Big) \indicator_{ \cA_{r,t,u}(\underline{A})}  \bigg]\nonumber\\
\geq&  \, \bbE\bigg[\exp\!\Big(-\sum_{i=1}^{n(u)} \indicator_{\Delta^i_{r,t}}\phi\big(x_i(u)-m(u)\big)\Big)   \bigg]-\bbP\Big[(\cA_{r,t,u} (\underline{A}))^c\Big].
\end{align}

Let $\varepsilon>0$. By \cite[Lemma~4.2]{BH14} there  exist $r_0(\varepsilon)$ and  $t_0(\varepsilon)$ such that for all $t\geq t_0(\varepsilon)$ and $r>r_0(\varepsilon)$, 
\begin{align}\label{Z.100}
\lim_{u\uparrow \infty}\bbP\Big[(\cA_{r,t,u} (\underline{A}))^c\Big]<\varepsilon.
\end{align}
Hence, by combining \eqref{Z.200} with \eqref{eq:ulimit0} and \eqref{eq:ulimit} we get 
\begin{align}\label{Z.201}
&c'_t \, \bbE\!\left[\exp\!\Big(-C  \big(e^{-\sqrt{2}\underline{A}}-e^{-\sqrt{2}\,\overline{A}}\big) Z_{r,t}^\gamma\Big)\right]- \varepsilon\nonumber\\
\leq & \,  c_t \, \bbE\!\left[\exp \!\Big(-C  \big(e^{-\sqrt{2}\underline{A}}-e^{-\sqrt{2}\,\overline{A}}\big) Z_{t}\Big)\right]\nonumber\\
\leq & \,  c'_t \, \bbE\!\left[\exp\!\Big(-C \big(e^{-\sqrt{2}\underline{A}}-e^{-\sqrt{2}\,\overline{A}}\big) Z_{r,t}^\gamma \Big)\right].
\end{align}
Recall that   $Z_t\rightarrow Z$ $\bbP$-a.s.\ as $t\to \infty$ (cf.\ \cite{LS87}), where $Z$ is $\bbP$-a.s.\ positive,
 and $\lim_{t\uparrow \infty} c_t=\lim_{t\uparrow\infty} c_t'=1$. Hence, for all $t$ and $r$ sufficiently large,
 \begin{equation}
\bbP\!\left[\left\vert \exp\!\Big(-C  \big(e^{-\sqrt{2}\underline{A}}-e^{-\sqrt{2}\,\overline{A}}\big) Z_{r,t}^\gamma \Big) - \exp \!\Big(-C  \big(e^{-\sqrt{2}\underline{A}}-e^{-\sqrt{2}\,\overline{A}}\big) Z_{t}\Big)\right\vert > \delta \right]<\varepsilon.
\end{equation} 
The claim now follows from the continuous mapping theorem since $\exp$ is injective and continuous.
\end{proof}

In the next lemma we lift the statement of Lemma~\ref{lem:compZ} on the level of the PCAFs, meaning that with high probability the PCAFs $F_{r,t}$ and $\tilde F_{r,t}$ are close to their thinned versions $F_{r,t}^\gamma$ and $\tilde F_{r,t}^\gamma$ defined by
\begin{align}
 F_{r,t}^\gamma(s) &  \ldef  \sum_{j=1}^{n(t)} \big( \sqrt{2} t -x_j(t) \big) e^{\sqrt{2}(x_j(t)-\sqrt{2}t)} \indicator_{\Delta^j_{r,t}} \, L^{\gamma(x_j(r))}_s, \\
\tilde F_{r,t}^\gamma(s) &\ldef   \sum_{k=0}^\infty L_s^{\frac k r} \,  \sum_{j\leq n(t)} \big(\sqrt{2} t -x_j(t)\big) \, e^{\sqrt{2} (x_j(t)-\sqrt{2}t)}\, \indicator_{\Delta^j_{r,t}} \, \indicator_{\big\{ \gamma(x_j(t)) \in \left[\frac k r, \frac{k+1}{r}\right)\big\}}. 
\end{align}

\begin{lemma} \label{lem:compF}
 For any $\varepsilon, \delta > 0$ and any $S>0$ there exist $r_1=r_1(\varepsilon, \delta,S)$ and  $t_1=t_1(\varepsilon, \delta, S)$ such that for all $r>r_1$ and  $t>3r \vee t_1$ the following holds.
 There exists a set $\Lambda_1=\Lambda_1(\varepsilon, \delta, S, r,t) \subset \Omega \times \Omega'$ with $\prob_x[\Lambda_1^c]<\varepsilon$ for all  $x\in \bbR_+$ such that on $\Lambda_1$,
\begin{align}
 \sup_{s\leq S}\left\vert F_{r,t}(s)-F_{r,t}^\gamma(s)\right\vert \leq \delta, \qquad  \sup_{s\leq S}\left\vert \tilde F_{r,t}(s)-\tilde F_{r,t}^\gamma(s)\right\vert \leq \delta.
\end{align}
\end{lemma}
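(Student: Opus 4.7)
The plan is to bound both differences pointwise by the product of a uniform-in-$a$ bound on the Brownian local time on $[0,S]$ and the martingale discrepancy $Z_t - Z_{r,t}^\gamma$ already controlled by Lemma~\ref{lem:compZ}. First, I would write
\begin{align}
F_{r,t}(s) - F_{r,t}^\gamma(s)
 = \sum_{j\leq n(t)} \big(\sqrt{2}t - x_j(t)\big)\, e^{\sqrt{2}(x_j(t)-\sqrt{2}t)}\, \indicator_{(\Delta_{r,t}^j)^c}\, L_s^{\gamma(x_j(r))}.
\end{align}
By Lemma~\ref{lem:Z}, $\prob$-a.s.\ there exists $\tau_0=\tau_0(\omega)$ such that for all $t \geq \tau_0$ every coefficient $\sqrt{2}t - x_j(t)$ is non-negative. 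Choosing $t_1 \geq t_0$ large enough that the event $\{t \geq \tau_0\}$ has $\prob$-probability at least $1-\varepsilon/3$, and using that $s \mapsto L_s^a$ is non-decreasing, I would then conclude that on this event,
\begin{align}
\sup_{s\leq S} \big|F_{r,t}(s) - F_{r,t}^\gamma(s)\big|
 \leq \Big(\sup_{a \in \bbR_+} L_S^a\Big) \cdot \big(Z_t - Z_{r,t}^\gamma\big).
\end{align}

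Next, by Lemma~\ref{lem:tail_L} the random variable $\sup_{a \in \bbR_+} L_S^a$ has finite $P_x$-expectation that is bounded uniformly in $x\in\bbR_+$, so Markov's inequality yields a constant $K=K(\varepsilon,\delta,S)$ with $P_x[\sup_a L_S^a > K] < \varepsilon/3$ for all $x$. Applying Lemma~\ref{lem:compZ} with the pair $(\delta/K,\varepsilon/3)$ in place of $(\delta,\varepsilon)$ furnishes $r_1=r_1(\varepsilon,\delta,S)$ and $t_1$ (enlarged if necessary) such that for $r > r_1$ and $t > 3r \vee t_1$,
\begin{align}
\prob\!\left[ \big|Z_t - Z_{r,t}^\gamma\big| > \delta/K \right] < \varepsilon/3.
\end{align}
Intersecting the three favourable events, the product set
\begin{align}
\Lambda_1
 \ldef \big\{\sup\nolimits_a L_S^a \leq K\big\} \cap \big\{|Z_t - Z_{r,t}^\gamma| \leq \delta/K\big\} \cap \{t\geq \tau_0\}
\end{align}
satisfies $\prob_x[\Lambda_1^c] < \varepsilon$ uniformly in $x$, and on $\Lambda_1$ one has $\sup_{s\leq S}|F_{r,t}(s) - F_{r,t}^\gamma(s)| \leq K \cdot (\delta/K) = \delta$.

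For the second estimate I would rearrange the order of summation to obtain
\begin{align}
\tilde F_{r,t}(s) - \tilde F_{r,t}^\gamma(s)
 = \sum_{j \leq n(t)} \big(\sqrt{2}t - x_j(t)\big)\, e^{\sqrt{2}(x_j(t)-\sqrt{2}t)}\, \indicator_{(\Delta_{r,t}^j)^c}\, L_s^{\lfloor r\gamma(x_j(t))\rfloor / r},
\end{align}
which is again non-negative on $\{t \geq \tau_0\}$ and bounded above by $(\sup_a L_S^a)(Z_t - Z_{r,t}^\gamma)$. Hence the \emph{same} set $\Lambda_1$ simultaneously yields the bound $\sup_{s\leq S}|\tilde F_{r,t}(s) - \tilde F_{r,t}^\gamma(s)| \leq \delta$. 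The main obstacle I anticipate is purely bookkeeping: making sure the implicit constants in Lemma~\ref{lem:tail_L} are truly uniform in the starting point $x \in \bbR_+$, so that the chosen $K$ works for every $x$ at once; apart from this, the argument is a direct synthesis of Lemma~\ref{lem:compZ}, Lemma~\ref{lem:Z} and the tail bound on Brownian local times.
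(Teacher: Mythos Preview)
Your proposal is correct and follows essentially the same approach as the paper: bound the difference by $(\sup_{a}L_S^a)\cdot(Z_t-Z_{r,t}^\gamma)$ on the event where all weights are non-negative, then control the two factors separately via Lemma~\ref{lem:tail_L} and Lemma~\ref{lem:compZ}. The only cosmetic difference is that the paper invokes the tail estimate in Lemma~\ref{lem:tail_L} directly to choose the threshold (called $\lambda$ there, your $K$), rather than passing through Markov's inequality on the expectation; your concern about uniformity in $x$ is already handled by that lemma, since the tail bound \eqref{eq:tail_supL} is stated uniformly in the starting point.
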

\begin{proof}
Recall that by Lemma~\ref{lem:Z} for $\prob$-a.e.\ $\om$ there exists $\tau_0=\tau_0(\om)$ such that 
 $\min_{i\leq n(t)}\left(\sqrt 2 t-x_i(t)\right) >0$ for all $t>\tau_0$. 
 Further, Lemma~\ref{lem:tail_L} gives that for any $\varepsilon >0$ there exists $\lambda=\lambda(\varepsilon, S)$ such that for all $x\in \bbR_+$,
 \begin{equation}\Eq(Lisa.5)
 P_x\Big[ \sup_{a\in \bbR_+} L^{a}_S> \lambda \Big]<\varepsilon.
 \end{equation} 
Together with Lemma~\ref{lem:compZ} this implies that there exist $r_1=r_1(\varepsilon, \delta, S)$ and $t_1=t_1(\varepsilon, \delta, S)$ such that for all $r>r_1$ and $t>3r \vee t_1$ there is a set $\Lambda_1=\Lambda_1(\varepsilon, \delta, S, r,t)$ with $\prob_x[\Lambda^c]<\varepsilon$ for all  $x\in \bbR_+$ on which
\begin{itemize}
 \item $t>\tau_0$,
 \item $\sup_{a\in \bbR_+} L^{a}_S \leq \lambda$,
 \item $\vert Z_t-Z_{r,t}^\gamma\vert \leq \delta/\lambda$.
\end{itemize}
Note that on the set $\Lambda_1$,
\begin{align}
\sup_{s\leq S}\left\vert F_{r,t}(s)-F_{r,t}^\gamma(s)\right\vert & \leq  \vert Z_t-Z_{r,t}^\gamma\vert \, \sup_{s\leq S}\max_{k\leq n(t)} L^{\gamma(x_k(r))}_s  \nonumber \\
& \leq  \vert Z_t-Z_{r,t}^\gamma\vert \,  \sup_{a\in \bbR_+} L^{a}_S \leq\delta,
\end{align}
which completes the proof  of the first statement. The second statement can be shown by similar arguments.
\end{proof}
 In the following we will write $\bbP_x\ldef \bbP\! \times \! P_x$,  $x\in \bbR_+$ for abbreviation. 
 
 \begin{prop} \label{prop:tildeF}
 For every $x\in \bbR_+$ and any $S>0$,
 \begin{align}
 \lim_{r\uparrow \infty}  \lim_{t\uparrow \infty}  \sup_{s\leq S}\big| \tilde F_{r,t}(s) -F(s)\big|=0, \qquad \text{in $\bbP_x$-probability}.
 \end{align}
 \end{prop}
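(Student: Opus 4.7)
The plan is to use a four-way triangle inequality
\[
\sup_{s\leq S} \bigl|\tilde F_{r,t}(s) - F(s)\bigr| \leq T_1(r,t) + T_2(r,t) + T_3(r,t) + T_4(r,t),
\]
where $T_1 \ldef \sup_{s\leq S}|\tilde F_{r,t}(s) - \tilde F_{r,t}^\gamma(s)|$, $T_2 \ldef \sup_{s\leq S}|\tilde F_{r,t}^\gamma(s) - F_{r,t}^\gamma(s)|$, $T_3 \ldef \sup_{s\leq S}|F_{r,t}^\gamma(s) - F_{r,t}(s)|$, and $T_4 \ldef \sup_{s\leq S}|F_{r,t}(s) - F(s)|$. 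Lemma~\ref{lem:compF} directly handles $T_1$ and $T_3$: for any $\varepsilon,\delta>0$, each of these is bounded by $\delta$ on a set of $\bbP_x$-probability at least $1-\varepsilon$ once $r$ and $t$ are large enough. Theorem~\ref{thm:constrF}(i) handles $T_4$, which converges to $0$, $\bbP_x$-a.s., as $t\uparrow\infty$ followed by $r\uparrow\infty$. Thus the real work is to control $T_2$.

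For $T_2$, observe that for each $j\leq n(t)$ there is a unique $k_j\in\bbZ_+$ with $\gamma(x_j(t))\in [k_j/r,(k_j+1)/r)$, so
\[
\tilde F_{r,t}^\gamma(s) - F_{r,t}^\gamma(s) \;=\; \sum_{j\leq n(t)} w_j \bigl( L_s^{k_j/r} - L_s^{\gamma(x_j(r))} \bigr),
\]
with $w_j \ldef (\sqrt{2}t-x_j(t))\, e^{\sqrt 2(x_j(t)-\sqrt 2 t)}\, \indicator_{\Delta^j_{r,t}} \geq 0$ and $\sum_j w_j \leq Z_t$. On $\Delta^j_{r,t}$ we have $|\gamma(x_j(t))-\gamma(x_j(r))|\leq e^{-r/2}$, hence $|k_j/r-\gamma(x_j(r))|\leq e^{-r/2}+1/r \rdef \eta_r$. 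Consequently,
\[
T_2 \;\leq\; Z_t \cdot \sup\bigl\{ |L_s^a-L_s^b| : s\leq S,\ a,b \in [0,M_S],\ |a-b|\leq \eta_r \bigr\},
\]
where $M_S \ldef \sup_{s\leq S} B_s$ (noting that $L_s^a = 0$ for $a > M_S$, so only $a,b\in[0,M_S]$ contribute).

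Given $\varepsilon>0$, choose $N<\infty$ with $P_x[M_S>N]<\varepsilon$; on $\{M_S\leq N\}$ restrict the supremum to $[0,N]$. By Lemma~\ref{lem:propL}, $(s,a)\mapsto L_s^a$ is jointly continuous, hence uniformly continuous on the compact $[0,S]\times[0,N]$, so the supremum above tends to $0$ as $r\uparrow\infty$, $P_x$-a.s.\ on that event. Combined with $Z_t\to Z <\infty$ $\prob$-a.s.\ (Lemma~\ref{lem:Z}), this shows $T_2\to 0$ in $\bbP_x$-probability as $t\uparrow\infty$ then $r\uparrow\infty$. Assembling the four bounds and choosing $\delta,\varepsilon$ arbitrarily small yields the claimed convergence.

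The main obstacle is $T_2$: the approximation of $\gamma(x_j(r))$ by its lattice point $k_j/r$ must be transferred from a statement about $\gamma$-values (spatial closeness, quantified via the thinning that defines $\Delta^j_{r,t}$) to a statement about local times, where one needs to combine the uniform continuity of $L$ on compact $(s,a)$-sets with the uniform-in-$t$ boundedness of the total martingale mass $Z_t$. The thinning is exactly what makes the first step possible, which is why the decomposition through $F_{r,t}^\gamma$ and $\tilde F_{r,t}^\gamma$ (rather than comparing $\tilde F_{r,t}$ to $F_{r,t}$ directly) is essential.
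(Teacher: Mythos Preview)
Your proof is correct and follows the same decomposition as the paper: reduce via Theorem~\ref{thm:constrF}(i) and Lemma~\ref{lem:compF} to controlling $T_2=\sup_{s\leq S}|\tilde F_{r,t}^\gamma(s)-F_{r,t}^\gamma(s)|$, then exploit that on $\Delta_{r,t}^j$ the lattice point $k_j/r$ and $\gamma(x_j(r))$ differ by at most $1/r+e^{-r/2}$, and finish using $Z_t\to Z$. The only difference is that the paper bounds $T_2$ directly via the H\"older estimate in Lemma~\ref{lem:propL}(ii), obtaining $T_2\leq C_1\,(1/r+e^{-r/2})^\alpha Z_t$ without any truncation in the spatial variable, whereas you use only joint continuity and pass through uniform continuity on the (random) compact $[0,S]\times[0,M_S]$; both work, the paper's route being slightly cleaner. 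One small point worth making explicit: the inequality $w_j\geq 0$ and $\sum_j w_j\leq Z_t$ require $\min_{j\leq n(t)}(\sqrt{2}t-x_j(t))>0$, which holds for $t\geq \tau_0(\omega)$ by Lemma~\ref{lem:Z}.
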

 
\begin{proof}
In view of  Theorem~\ref{thm:constrF}(i)  and Lemma~\ref{lem:compF} it suffices to show that
 \begin{align} \label{eq:conv_thinnedF}
 \lim_{r\uparrow \infty}  \lim_{t\uparrow \infty}  \sup_{s\leq S}\big| \tilde F^\gamma_{r,t}(s) -F_{r,t}^\gamma(s)\big|=0, \qquad \text{in $\bbP_x$-probability}.
 \end{align}
  By Lemma~\ref{lem:Z}, $\prob$-a.s., there exists $\tau_0$ such that $\min_{j\leq n(t)} (\sqrt{2}t -x_i(t))>0$ for all $t\geq \tau_0$, and for such $t$ and  any $x\in\bbR_+$ we get 
 \begin{align}
& \sup_{s\leq S} \big| \tilde F^\gamma_{r,t}(s) - F^\gamma_{r,t}(s) \big| \nonumber \\
  \leq &
  \sum_{k=0}^\infty   \sum_{j\leq n(t)} \big(\sqrt{2} t -x_j(t)\big) \, e^{\sqrt{2} (x_j(t)-\sqrt{2}t)} \,
 \sup_{s\leq S} \Big| L_s^{\frac k r}  - L_s^{\gamma(x_j(r))} \Big| 
   \indicator_{\Delta_{r,t}^j \cap \left\{ \gamma(x_j(t)) \in \left[\frac k r, \frac{k+1}{r}\right)\right\}}. 
\end{align}
Note that  on the event $\Delta_{r,t}^j \cap \{ \gamma(x_j(t)) \in \left[\frac k r, \frac{k+1}{r}\right)\}$ we have
\begin{align}
\gamma(x_j(r)) \in \big[\tfrac k r -e^{-r/2}, \tfrac{k+1}{r}+e^{-r/2}\big),
\end{align} 
which implies  $\big| \frac k r - \gamma(x_j(r))\big| \leq \frac 1 r + e^{-r/2}$. Hence, by Lemma~\ref{lem:propL}(ii), $\prob_x$-a.s.,
\begin{align}
& \sup_{s\leq S} \big| \tilde F^\gamma_{r,t}(s) - F^\gamma_{r,t}(s) \big|  \leq 
C_1 \, \big(\tfrac 1r +e^{-r/2}\big)^{\alpha} \, Z_t.
\end{align}
Recall that $\prob$-a.s.\ $Z_t\rightarrow Z$ as $t\to \infty$  again by Lemma~\ref{lem:Z}, and we obtain  \eqref{eq:conv_thinnedF}.
\end{proof}

\begin{proof}[Proof of Theorem~\ref{thm:rw_approx}]
(i) By Proposition~\ref{prop:tildeF}, $\tilde F_{r,t} \rightarrow F$ locally uniformly in $\prob_x$-probability as first $t\uparrow \infty$ and then $r\uparrow \infty$. In particular, using Lemma~\ref{lem:cadlag_paths} (i) we have that $\tilde F_{r,t} \rightarrow F$ in $M_1$-topology in $\prob_x$-distribution, that is for all bounded $\varphi$ acting on $D((0,\infty), \bbR_+)$ which are continuous in $M_1$-topology on a set with full $\bbP_x$-measure,
\begin{align} 
  \lim_{r\uparrow \infty} \lim_{t \uparrow \infty} \bbE\big[ E_x[\varphi(F_{r,t})] \big] = \bbE\big[ E_x[\varphi(F)] \big]. 
\end{align}
The claim follows now similarly as in the proof of Theorem~\ref{thm:approx_proc} above.

(ii) Recall that $(\frac 1 r X_{r^2 s})_{s\geq 0}$ converges towards $B\in C([0,\infty),\bbR_+)$ in distribution on $D([0,\infty), \mathbb{R}_+)$ in $J_1$-topology.  The statement now follows from Proposition~\ref{prop:tildeF} and Lemma~\ref{lem:cadlag_paths} similarly as in the proof of (i) and Theorem~\ref{thm:approx_proc} (cf.\ \cite[Corollary~1.5]{CHK16}). 
\end{proof}

\section{The subcritical case} \label{sec:subcrit}
 
Recall that the McKean-martingale is defined as
\begin{equation}
Y^{\sigma}_t\ldef \sum_{i=1}^{n(t)} e^{\sqrt{2}\sigma x_k(t)-(1+\sigma^2)t}, \qquad \sigma\in (0,1),
\end{equation}
which is  normalised to have mean $1$. By \cite[Theorem~4.2]{BH13} the limit
\begin{align}
 Y^\sigma \ldef \lim_{t \uparrow \infty} Y^\sigma_t
\end{align}
exists $\mathbb{P}$-a.s.\ and in $L^1(\mathbb{P})$.
For $v,r\in\mathbb{R}_+$ and $t>r$, we define a truncated version of the McKean-martingale $Y^{\sigma}_t$ by
\begin{equation}\label{Y.1}
Y^{\sigma}_{r,t}(v)\ldef\sum_{j\leq n(t)}e^{\sqrt 2 \sigma x_j(t)-(1+\sigma^2) t}\indicator_{\{\gamma(x_i(r))\leq v\}}.
\end{equation}
 
\begin{prop}\label{lem.Y}
 For each $v\in\mathbb{R}_{+}$ the limit   
 \begin{equation}\label{Z.2}
 Y^{\sigma}(v)\ldef\lim_{r\uparrow \infty}\lim_{t\uparrow \infty}Y_{r,t}^{\sigma}(v) 
 \end{equation}
  exists $\mathbb{P}$-a.s. In particular, $0\leq Y^{\sigma}(v) \leq Y^{\sigma}$. Moreover, $Y^{\sigma}(v)$ is increasing in $v$ and the corresponding Borel measure $M^\sigma$ on $\bbR_+$, defined via $M^\sigma([0,v])=Y^\sigma(v)$ for all $v\in \bbR_+$, is $\prob$-a.s.\ non-atomic.
\end{prop}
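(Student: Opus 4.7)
The plan is to treat this in direct analogy with the convergence of the truncated derivative martingale measure $M_{r,t}\to M$ in \cite{BH14}, exploiting however the additional comfort that the McKean martingale $Y^\sigma$ is uniformly integrable. I would proceed in three stages: first handle the inner limit $t\uparrow\infty$ via the branching decomposition; second handle the outer limit $r\uparrow\infty$ using a boundary estimate on the embedding $\gamma$; and third derive monotonicity and non-atomicity.

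For the inner limit, fix $r$ and $v$ and condition on the $\sigma$-field $\mathcal{F}_r$ generated by the tree and the particle positions up to time $r$. For each $k\le n(r)$ the descendants of particle $k$ form an independent BBM started at $x_k(r)$, so the partial sum restricted to descendants of $k$ factors as $e^{\sqrt 2\sigma x_k(r)-(1+\sigma^2)r}\,Y^{\sigma,k}_{t-r}$, where $Y^{\sigma,k}_{t-r}$ are i.i.d.\ copies of $Y^\sigma_{t-r}$, independent of $\mathcal{F}_r$. Since $n(r)<\infty$ $\prob$-a.s.\ and $Y^{\sigma,k}_{t-r}\to Y^{\sigma,k}$ a.s.\ as $t\uparrow\infty$ by \cite[Theorem~4.2]{BH13}, the inner limit exists $\prob$-a.s.\ and equals
\begin{align*}
 U^\sigma_r(v)\ldef \sum_{k\le n(r):\,\gamma(x_k(r))\le v} e^{\sqrt 2\sigma x_k(r)-(1+\sigma^2)r}\,Y^{\sigma,k}.
\end{align*}
Moreover, applying the same decomposition with the indicator dropped yields the identity $\sum_{k\le n(r)}e^{\sqrt 2\sigma x_k(r)-(1+\sigma^2)r}Y^{\sigma,k}=Y^\sigma$ (because this is $\lim_{t\to\infty}Y^\sigma_t$), so $0\le U^\sigma_r(v)\le Y^\sigma$ uniformly in $r,v$.

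For the outer limit, I would show that $(U^\sigma_r(v))_r$ is Cauchy. Refining the above decomposition at $r'>r$ (so that each descendant $k'\le n(r')$ has a unique ancestor $k\le n(r)$ at time $r$), the difference $U^\sigma_{r'}(v)-U^\sigma_r(v)$ is a signed sum over the ``boundary particles'' $k'$ for which $\indicator_{\{\gamma(x_{k'}(r'))\le v\}}\neq \indicator_{\{\gamma(x_k(r))\le v\}}$. From \eqref{map.3} one has
\begin{align*}
 \gamma(x_{k'}(r'))-\gamma(x_k(r))=\sum_{r<t_\ell\le r'}u_\ell(r')\,e^{-t_\ell},
\end{align*}
so the boundary set is contained in $\{k':|\gamma(x_{k'}(r'))-v|\le C_\omega e^{-r}\}$ for a random $C_\omega$ depending only on the offspring counts. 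Since the total contribution $\sum_{k'}e^{\sqrt 2\sigma x_{k'}(r')-(1+\sigma^2)r'}Y^{\sigma,k'}$ equals $Y^\sigma$, showing $L^1(\prob)$-Cauchyness reduces to estimating the expected mass in an $e^{-r}$-neighbourhood of $v$, which can be done using the standard first-moment computation for $Y^\sigma$ at time $r'$ together with the Gaussian displacement of BBM particles, in analogy with \cite[Lemma~4.2]{BH14}. Almost-sure convergence is then obtained along a subsequence and upgraded to all $r$ by monotonicity in $v$ applied to a countable dense set, yielding $Y^\sigma(v)\le Y^\sigma$ on a common full-measure event.

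Monotonicity of $v\mapsto Y^\sigma(v)$ is immediate from $Y^\sigma_{r,t}(v)\le Y^\sigma_{r,t}(v')$ for $v\le v'$. For non-atomicity of $M^\sigma$, it suffices to show $\prob[M^\sigma(\{v\})>0]=0$ for each fixed $v\in\bbR_+$, which amounts to proving $Y^\sigma(v)=Y^\sigma(v^-)$ almost surely. This can be done exactly as in \cite[Proposition~3.2]{BH14}: by the recursive distributional identity inherited from the Galton--Watson structure, any atom at $v$ would propagate along every generation and contradict the non-degeneracy of the positions, so a first-moment/second-moment argument at an arbitrarily fine scale shows the boundary mass vanishes. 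I expect the main obstacle to be Step~2, specifically the quantitative boundary estimate that localises the disagreement set around $\gamma=v$; once this is in place, the rest follows the same template as the critical case, but with the technical simplification that $Y^\sigma$ is $L^1$-bounded rather than merely a.s.\ finite.
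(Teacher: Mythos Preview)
Your proposal is correct and aligns with the paper's approach: the paper's own proof is the one-liner ``This follows by the same arguments as in \cite[Proposition~3.2]{BH14}. Observe that $Y_{r,t}^\sigma(v)$ is non-negative by definition,'' and your three-stage outline (branching decomposition for the inner limit, boundary control for the outer limit, non-atomicity via the recursive structure) is exactly a sketch of those arguments adapted to the McKean martingale.

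One remark on your Step~2: you work harder than necessary. Since $t\mapsto\gamma(u(t))$ is non-decreasing by \eqref{map.3} (only non-negative terms are added), the indicator $\indicator_{\{\gamma(x_{k'}(r'))\le v\}}$ is dominated by $\indicator_{\{\gamma(x_k(r))\le v\}}$ whenever $k$ is the time-$r$ ancestor of $k'$; combined with the positivity of all weights and of the limits $Y^{\sigma,k'}$, this makes $r\mapsto U^\sigma_r(v)$ non-increasing and bounded below by $0$, so a.s.\ convergence is immediate without any Cauchy or boundary estimate. The quantitative localisation you flag as the ``main obstacle'' is therefore not needed here --- this is part of the simplification the paper alludes to when noting that non-negativity is automatic in the subcritical case.
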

\begin{proof}
This follows by the same arguments as in \cite[Proposition~3.2]{BH14}. Observe that $Y_{r,t}^{\sigma}(v)$ is non-negative by definition.
\end{proof}

Our goal is to state an analogue to Theorem~\ref{thm:constrF} for the subcritical case. This will be done in Subsection~\ref{sec:subcr_result} below.
First we notice that in the subcritical case  the martingales $Y^{\sigma}$ with $\sigma<1$ appear in the description of the limiting extremal process of two speed branching Brownian motion and that the extended convergence result can be transferred  to this class of models. This is the purpose of Subsection~\ref{sec:twoSpeed}.

\subsection{The extremal process of two-speed branching Brownian motion} \label{sec:twoSpeed}
Next we recall the characterisation of the extremal process for a two-speed branching Brownian motion established in \cite{BH13}. For a fixed time $u$, a two-speed BBM is defined similarly as the ordinary BBM but at time $t'$ the particles move as independent Brownian motions with variance
\begin{align} \label{speed.1}
\sigma^2(t')= \begin{cases}			
\sigma_1^2,& 0\leq t' < bu,\\
\sigma_2^2, & bu\leq t'\leq u,
\end{cases}
\qquad 0<b\leq 1,
\end{align}   
where the  total variance is normalised by assuming $b\sigma_1^2 + (1-b) \sigma_2^2=1$.
Then,  if $\sigma_1<\sigma_2$  the limit $Y^{\sigma_1}$ of the McKean-martingale  appears in the extremal process of the two-speed BBM. More precisely, we have the following result proven in \cite[Theorem~1.2]{BH13}.

 \begin{theorem} \label{thm:twoSpeed}
 Let $ \tilde x_k(u)$ be a branching Brownian motion with variable speed  $\sigma^2(t')$ as given 
in \eqref{speed.1}.  Assume that $\sigma_1<\sigma_2$. Then, 
\begin{enumerate} 
\item[(i)] $\lim_{u\uparrow \infty} \mathbb{P}\left(\max_{k\leq n(u)} \tilde x_k(u)-\tilde m(u) \leq y\right)
= \mathbb{E} \Big[ \exp\big(-C(\sigma_2) Y^{\sigma_1} e^{-\sqrt 2 y}\big) \Big]$, \newline
where $\tilde m(u)=\sqrt 2 u-\frac{1}{2\sqrt 2} \log u$ and $C(\sigma_2)$ is a constant depending on $\sigma_2$. 

\item [(ii)] The point process 
\begin{equation}
\label{main.2}
\sum_{k\leq n(u)} \delta_{\tilde x_k(u)-\tilde m(u)}\Rightarrow \sum_{i,j}\delta_{\eta_i+\sigma_2\Lambda^{(i)}_j} \qquad \text{as $u\uparrow \infty$  in law.} 
\end{equation}
Here $\eta_i$ denotes the $i$-th atom of a mixture of Poisson point process with intensity 
measure $C(\sigma_2)Y^{\sigma_1} e^{-\sqrt 2 y}dy$ with $C(\sigma_2)$ as in (i),  
and
$\Lambda^{(i)}_j$ are the atoms of independent and identically distributed  point processes $\Lambda^{(i)}$, which are the limits in law
of 
\begin{equation}
\sum_{k\leq n(u) }\delta_{\bar x_k(u)-\max_{j\leq n(u)}\bar x_j(u)},
\label{main.4}
\end{equation}
where $\bar x(u)$ is a BBM of speed $1$ conditioned on $\max_{j\leq n(u)} \bar x_j(u)\geq \sqrt 2\sigma_2 t$.
\end{enumerate}
\end{theorem}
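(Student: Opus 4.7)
My plan is to condition on the first-phase configuration at time $bu$ and exploit the conditional independence of the Phase-2 descendants. Given $\mathcal{F}_{bu} := \sigma(\tilde x_k(s): s \le bu)$, each Phase-1 particle $j \le n(bu)$ initiates an independent standard-rate BBM of variance $\sigma_2^2$ for duration $T := (1-b)u$, so $\tilde x_k(u) = \tilde x_j(bu) + \tilde x_k^{(j)}(T)$ for $k$ a descendant of $j$. A preliminary saddle-point heuristic shows that the winning trajectory at time $u$ passes through Phase~1 at the atypical level $\sim \sqrt{2}\sigma_1^2\, bu$, strictly below the Phase-1 maximum $\sqrt{2}\sigma_1\, bu$ (note $\sigma_1 < 1 < \sigma_2$ when $\sigma_1 < \sigma_2$), and then performs an upper large deviation in Phase~2. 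Crucially, $\sqrt{2}\sigma_1^2\, bu$ is exactly the McKean level for parameter $\sqrt{2}\sigma_1$, which is why $Y^{\sigma_1}$ rather than the derivative martingale $Z$ governs the limit.

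For (i), write
\begin{align*}
\mathbb{P}\!\big(\max_k \tilde x_k(u) - \tilde m(u) \le y\big)
= \mathbb{E}\!\bigg[\prod_{j \le n(bu)} \Psi_u^{\sigma_2}\!\big(y + \tilde m(u) - \tilde x_j(bu)\big)\bigg],
\end{align*}
where $\Psi_u^{\sigma_2}(z)$ is the CDF of the maximum of a variance-$\sigma_2^2$ BBM at time $T$. The central technical step is a refined upper large-deviation estimate for $1 - \Psi_u^{\sigma_2}(z)$ at displacements of order $u$, which lies outside the scope of Bramson's traveling-wave regime. Combining a Gaussian tail bound with a ballot-type second-moment argument (for trajectories staying below the optimal interpolating line) produces
\begin{align*}
1 - \Psi_u^{\sigma_2}\!\big(y + \tilde m(u) - \tilde x_j(bu)\big)
\;\sim\; C(\sigma_2)\, e^{-\sqrt{2}y}\, e^{\sqrt{2}\tilde x_j(bu) - (1+\sigma_1^2)bu}
\end{align*}
for $\tilde x_j(bu)$ near the dominant level $\sqrt{2}\sigma_1^2\, bu$, the Gaussian prefactor absorbing the mismatch between the log-coefficient $\tfrac{1}{2\sqrt{2}}$ in $\tilde m(u)$ and the naive Phase-2 log-coefficient $\tfrac{3\sigma_2}{2\sqrt{2}}$. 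One verifies by differentiating the saddle-point functional in $y$ (using the constraint $b\sigma_1^2 + (1-b)\sigma_2^2 = 1$) that its slope at the minimizer $\mu^* = \sigma_1^2$ equals exactly $-\sqrt{2}$, producing the overall $\sqrt{2}$ despite the Phase-2 Gumbel rate being $\sqrt{2}/\sigma_2$. Summing and using $\log \Psi_u^{\sigma_2} \sim -(1 - \Psi_u^{\sigma_2})$, the Phase-1 sum is identified with $Y^{\sigma_1}_{bu}$ via $\tilde x_j(bu) \stackrel{d}{=} \sigma_1 x_j(bu)$; the $L^1$-convergence $Y^{\sigma_1}_{bu} \to Y^{\sigma_1}$ from \cite[Theorem~4.2]{BH13} together with dominated convergence then yields (i).

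For part (ii) the same conditioning is applied to a general Laplace functional $\mathbb{E}[\exp(-\langle \phi, N_u\rangle)]$ with $\phi \in C_c^+(\mathbb{R})$. The extremal process of each Phase-2 BBM of variance $\sigma_2^2$ converges, after centering, to a decorated Poisson point process whose decorations are distributed as $\sigma_2 \Lambda^{(i)}$, the factor $\sigma_2$ arising from $\tilde x_k^{(j)}(T) \stackrel{d}{=} \sigma_2 x_k^{(j)}(T)$ applied to the cluster distribution in \eqref{extremal.3} rescaled to the threshold $\sqrt{2}\sigma_2 T$. Taking the product over Phase-1 particles $j$ and identifying the Phase-1 Gibbs-type weights with $Y^{\sigma_1}$ as in (i) produces the claimed Cox cluster representation with random intensity $C(\sigma_2)\, Y^{\sigma_1}\, e^{-\sqrt{2}y}\, dy$.

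The main technical obstacle is the refined large-deviation estimate for $1 - \Psi_u^{\sigma_2}$ at displacements of order $u$, which is strictly outside the Bramson traveling-wave regime and must capture both the correct Gaussian prefactor and the polynomial correction coming from the ballot-type barrier constraint. I would establish this via coupling $\tilde x^{(j)}$ with an auxiliary BBM conditioned to stay below a moving linear barrier interpolating from $(0, \tilde x_j(bu))$ to $(T, \tilde m(u) + y)$, using a Mallein-type first-moment computation combined with a truncated second-moment bound on the number of particles exiting at the right height. This localization is precisely where the assumption $\sigma_1 < \sigma_2$ enters essentially, as it ensures the optimal interpolating slope is intermediate and the ballot-type argument is sharp.
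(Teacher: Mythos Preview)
The paper does not prove this theorem at all: it is quoted verbatim as \cite[Theorem~1.2]{BH13} and used as input for the subsequent construction. So there is no ``paper's own proof'' to compare against; the correct action here is simply to cite the result.

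That said, your sketch is broadly faithful to how the result is actually established in \cite{BH13}: condition on $\mathcal{F}_{bu}$, reduce to a product over Phase-1 particles of Phase-2 maximum CDFs, and use a sharp upper-tail estimate for the BBM maximum at displacements of order $u$ (outside the Bramson window) to extract the McKean weights $e^{\sqrt{2}\sigma_1 x_j(bu)-(1+\sigma_1^2)bu}$. The identification of the optimal Phase-1 level with the McKean saddle and the appearance of $Y^{\sigma_1}$ rather than $Z$ is exactly the mechanism in \cite{BH13}. Two minor cautions if you were to flesh this out: the precise tail asymptotic you need is \cite[Proposition~3.1]{BH13} (or its equivalent), and your description of the log-correction bookkeeping (``the Gaussian prefactor absorbing the mismatch'') is heuristic---in the actual argument one must localise the Phase-1 positions to a window around $\sqrt{2}\sigma_1^2 bu$ and control the contribution of particles outside it, which is where most of the work lies. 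For the present paper, however, none of this is needed: just cite \cite{BH13}.
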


Using the embedding $\gamma$ the convergence result in Theorem~\ref{thm:twoSpeed} can be extended as follows.
\begin{theorem}\label{thm.extend}
The point process 
\begin{align}
 \sum_{k=1}^{n(t)} \delta_{(\gamma(u^k(u)), \tilde x_k(u)-\tilde m(u))}\Rightarrow \sum_{i,j}\delta_{(q_i,p_i)+(0,\Lambda^{(i)}_j)}
\end{align}
in law on $\mathbb{R}_+\times \mathbb{R}$, as
$u\uparrow \infty$, where  $(q_i,p_i)_{i\in \mathbb{N}}$ are the atoms of a Cox process on $\mathbb{R}_+\times \mathbb{R}$ with intensity measure
$M^{\sigma_1}( dv)\times C(\sigma_2)e^{-\sqrt{2} x}dx$, where $M^{\sigma_1}(dv)$ is the random measure on $\mathbb{R}_+$ characterised in Proposition \ref{lem.Y}, and $\Lambda_j^{(i)}$  are the atoms of  independent and identically distributed point processes $\Lambda^{(i)}$ as in Theorem~\ref{thm:twoSpeed} (ii).
\end{theorem}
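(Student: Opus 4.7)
\bigskip

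\noindent\textbf{Proof proposal for Theorem~\ref{thm.extend}.}

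The plan is to prove convergence of Laplace functionals, following closely the strategy used for the critical case in \cite[Theorem~3.1]{BH14} (which is the statement recorded in \eqref{extremal.5}), but replacing the role of the derivative martingale $Z$ by that of the McKean martingale $Y^{\sigma_1}$ and using the extremal asymptotics of two-speed BBM from Theorem~\ref{thm:twoSpeed}. Fix a continuous nonnegative test function $\varphi: \bbR_+ \times \bbR \to [0,\infty)$ with compact support bounded below, and set
\begin{align}
\Phi(u) \ldef \mathbb{E}\!\left[\exp\!\Big(-\sum_{k=1}^{n(u)}\varphi(\gamma(u^k(u)),\tilde x_k(u)-\tilde m(u))\Big)\right].
\end{align}
By Kallenberg's criterion for convergence to a Cox process, it suffices to prove that $\Phi(u)$ tends, as $u\uparrow\infty$, to the Laplace functional of the candidate limit, i.e.
\begin{align}
\mathbb{E}\!\left[\exp\!\left(-C(\sigma_2)\int_{\bbR_+}\!\!\int_{\bbR}\Big(1-\mathbb{E}\big[e^{-\sum_j\varphi(v,x+\Lambda^{(1)}_j)}\big]\Big)e^{-\sqrt 2 x}\,dx\,M^{\sigma_1}(dv)\right)\right].
\end{align}

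First I would truncate in the spatial label. By \cite[Lemma~4.2]{BH14} (cited already in the proof of Lemma~\ref{lem:compZ}), for every $\varepsilon>0$ one may choose $r$ large and then $u\gg r$ so that, with probability at least $1-\varepsilon$, every particle $k$ reaching height close to $\tilde m(u)$ satisfies $|\gamma(u^k(u))-\gamma(u^k(r))|\leq e^{-r/2}$. Using uniform continuity of $\varphi$ on its compact support, one may therefore replace the spatial label $\gamma(u^k(u))$ by $\gamma(u^k(r))$ inside $\varphi$, up to an error in $\Phi(u)$ that tends to zero as first $u\uparrow\infty$ then $r\uparrow\infty$.

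Next I would condition on $\mathcal{F}_r$. For fixed $r$ and $u$ large enough that $r<bu$, conditionally on $\mathcal{F}_r$ the descendants of the $n(r)$ particles alive at time $r$ are independent two-speed BBM's starting from the positions $\tilde x_j(r)$. Writing $\Phi(u)$ as a nested expectation and using the truncated replacement from the previous step, one obtains the approximation
\begin{align}
\Phi(u)\;\approx\;\mathbb{E}\!\left[\prod_{j=1}^{n(r)}\mathbb{E}\!\Big[\exp\!\Big(-\!\!\sum_{k\in\mathrm{desc}(j)}\!\!\varphi(\gamma(u^j(r)),\tilde x_k(u)-\tilde m(u))\Big)\,\Big|\,\mathcal{F}_r\Big]\right].
\end{align}
The inner conditional expectation can be evaluated as $u\uparrow\infty$ by exactly the same analysis that leads to Theorem~\ref{thm:twoSpeed}(ii) (see \cite[Sec.~3]{ABK_P} for the single-speed version and \cite[Thm.~1.2]{BH13} for the two-speed extension): the contribution of the cluster of particle $j$ to the Laplace functional is
\begin{align}
1 - e^{\sqrt 2\sigma_1\tilde x_j(r)-(1+\sigma_1^2)r}\,C(\sigma_2)\!\int_{\bbR}\!\Big(1-\mathbb{E}\big[e^{-\sum_i\varphi(\gamma(u^j(r)),x+\Lambda^{(1)}_i)}\big]\Big)e^{-\sqrt 2 x}\,dx \;+\;o(1),
\end{align}
uniformly in $j$ on the event that $\tilde x_j(r)$ is not too large. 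Taking logarithms and summing over $j\leq n(r)$ produces an exponential in the truncated McKean sum of the form $Y^{\sigma_1}_{r,u}(v)$ weighted by the $v$-dependent integral against $\varphi$. Passing first $u\uparrow\infty$ and then $r\uparrow\infty$, Proposition~\ref{lem.Y} identifies the resulting limit as the integral against $M^{\sigma_1}(dv)$, yielding the claimed Laplace functional.

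The main obstacle is controlling the passage from the integrated statement of Theorem~\ref{thm:twoSpeed} to the refined spatial statement with the label $\gamma(u^j(r))$ frozen at time $r$: one needs sufficient uniformity in the contribution of each ancestor, together with negligibility of atypical clusters, to legitimately interchange the limits in $u$ and $r$ and to replace the sum $\sum_j e^{\sqrt 2\sigma_1\tilde x_j(r)-(1+\sigma_1^2)r}\,\delta_{\gamma(u^j(r))}$ by $M^{\sigma_1}$ in the vague topology against the bounded continuous $v$-factor. These estimates are of the same flavour as those carried out in \cite[Section~4]{BH14} for the derivative martingale, and adapt essentially verbatim since the required localisation of the genealogy in terms of $\gamma$ is insensitive to whether the particles are reweighted by $Z$ or by $Y^{\sigma_1}$.
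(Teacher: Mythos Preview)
Your proposal is correct and follows essentially the same route as the paper's (very short) proof sketch: both reduce to the argument of \cite[Theorem~3.1]{BH14}, with the derivative martingale replaced by the McKean martingale $Y^{\sigma_1}$ and the single-speed extremal asymptotics replaced by those of \cite{BH13}. The one place where your citations are slightly off is the genealogical localisation step: \cite[Lemma~4.2]{BH14} is stated for standard BBM, and its proof uses the path localisation of extremal particles for that model; for two-speed BBM you need the corresponding localisation from \cite[Proposition~2.1]{BH13} together with the tail bound on the maximum from \cite[Proposition~3.1]{BH13}, which is exactly what the paper invokes. Once those two-speed inputs are in place, the thinning and the $\gamma(u^k(u))\to\gamma(u^k(r))$ replacement go through as you describe, and the rest of your outline (conditioning on $\mathcal{F}_r$, per-ancestor cluster asymptotics producing the McKean weights, and passage to $M^{\sigma_1}$ via Proposition~\ref{lem.Y}) matches the paper.
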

\begin{proof}
The proof goes along the lines of the proof of \cite[Theorem~3.1]{BH14}. Note that by the localisation of the path of extremal particles given in \cite[Proposition~2.1]{BH13} the thinning can be applied in the same way using \cite[Proposition~3.1]{BH13} which provides the right tail bound on the maximum. 
This gives an alternative way to get the convergence of the local maxima to a Poisson point process. There the McKean-martingale $Y^{\sigma_1}_t$ appears naturally instead of the derivative martingale and one proceeds as in the proof of \cite[Theorem~3.1]{BH14}.
\end{proof}

\subsection{Approximation of the PCAF and the process} \label{sec:subcr_result}
Similarly as in the critical case, for any fixed $\sigma \in (0,1)$ we define the measure $ M^{\sigma}_{r,t}$ on $\bbR_+$ associated with $Y^{\sigma}_{r,t}$ by
\begin{align}
 M^{\sigma}_{r,t} \ldef  \sum_{j\leq n(t)} e^{\sqrt 2 \sigma x_j(t)-(1+\sigma^2) t} \delta_{\gamma(x_j(r))}.
\end{align}
Then Theorem~\ref{thm.extend} implies that $\prob$-a.s.
\begin{align}
 M^{\sigma} = \lim_{r\uparrow \infty} \lim_{t\uparrow \infty} M^{\sigma}_{r,t} \qquad \text{vaguely}.  
\end{align}
Again we are aiming to lift this convergence on the level of the associated PCAFs.

\begin{prop} \label{prop:FrtPCAF.2}
 Let $\sigma\in(0,1)$ be fixed. Then, $\prob$-a.s., for any $0\leq r <t$ the following hold.
\begin{enumerate}
 \item[(i)] The unique PCAF of $B$ with Revuz measure $M_{r,t}^\sigma$ is given by
\begin{align}\Eq(lisa.F2)
 F^{\sigma}_{r,t}: [0,\infty)\rightarrow [0,\infty) \quad  s \mapsto    \sum_{j=1}^{n(t)}  e^{\sqrt{2} \sigma x_j(t)-(1+\sigma^2)t} L^{\gamma(x_j(r))}_s.
\end{align}

\item[(ii)]
There exists a set $\Lambda \subset \Omega'$ with $P_x[\Lambda]=1$ for all $x\in \bbR_+$, on which $F^\sigma_{r,t}$ is continuous, increasing and
satisfies $F^\sigma_{r,t}(0)=0$ and $\lim_{s \to \infty} F^\sigma_{r,t}(s)=\infty$.
\end{enumerate}
\end{prop}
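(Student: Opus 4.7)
The plan is to follow the proof of Proposition~\ref{prop:FrtPCAF} essentially verbatim, observing that the subcritical case is in fact a bit simpler than the critical one: the weights $e^{\sqrt{2}\sigma x_j(t)-(1+\sigma^2)t}$ are manifestly strictly positive and finite for every realisation, so we do not need the analogue of the argument that $\min_{j\leq n(t)}(\sqrt{2}t-x_j(t))$ eventually becomes positive (which in the critical case required Lemma~\ref{lem:Z}). Consequently the statement should hold for \emph{every} $0\leq r<t$ rather than only for $t\geq \tau_0(\omega)$.

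For part (i), fix any $\omega\in\Omega$ and any $0\leq r<t$. Since $n(t)<\infty$, the measure $M^{\sigma}_{r,t}$ is a finite linear combination with strictly positive coefficients $c_j=e^{\sqrt{2}\sigma x_j(t)-(1+\sigma^2)t}$ of the Dirac masses $\delta_{\gamma(x_j(r))}$ at points $\gamma(x_j(r))\in\bbR_+$. By Lemma~\ref{lem:loc_dirac} the PCAF of $B$ with Revuz measure $\delta_a$ is the local time $s\mapsto L_s^a$. Since the Revuz correspondence is linear and the class of PCAFs is stable under finite sums with positive coefficients, the functional $F^{\sigma}_{r,t}$ in \eqv(lisa.F2) is a PCAF of $B$ with Revuz measure $M^{\sigma}_{r,t}$. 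The only polar set for the reflected Brownian motion on $\bbR_+$ is the empty set, so $M^{\sigma}_{r,t}$ trivially does not charge polar sets, and uniqueness up to equivalence follows from the standard one-to-one Revuz correspondence (e.g.\ \cite[Theorem~4.1.1]{CF12}), exactly as in Theorem~\ref{thm:constrF}(ii).

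For part (ii), apply Lemma~\ref{lem:propL}: there is a set $\Lambda\subset\Omega'$ with $P_x[\Lambda]=1$ for all $x\in\bbR_+$ on which the map $(a,s)\mapsto L_s^a$ is jointly continuous, each $s\mapsto L_s^a$ is non-decreasing with $L_0^a=0$, and, by recurrence of reflected Brownian motion on $\bbR_+$, $\lim_{s\to\infty}L_s^a=\infty$ for every $a\in\bbR_+$. Restricted to $\Lambda$, the function $F^{\sigma}_{r,t}$ is a finite sum, with strictly positive coefficients, of continuous non-decreasing functions vanishing at $0$, hence it inherits continuity, monotonicity, and the initial condition $F^{\sigma}_{r,t}(0)=0$. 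Divergence $F^{\sigma}_{r,t}(s)\to\infty$ as $s\to\infty$ follows by bounding $F^{\sigma}_{r,t}(s)\geq c_1 L_s^{\gamma(x_1(r))}\to\infty$.

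There is essentially no genuine obstacle here: the only thing to verify carefully is that the structural simplification relative to the critical proof (no need to invoke Lemma~\ref{lem:Z}) does not hide a subtlety. Since strict positivity of the coefficients and finiteness of $n(t)$ are immediate, the cited lemmas apply pointwise in $\omega$ and $(r,t)$, giving the full-strength statement claimed.
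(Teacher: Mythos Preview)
Your proof is correct and follows essentially the same approach as the paper: both invoke Lemma~\ref{lem:loc_dirac} and Lemma~\ref{lem:propL} for the structural properties of the local times, and both note that the subcritical case is simpler because the exponential weights are automatically positive, obviating the appeal to Lemma~\ref{lem:Z}. Your write-up is simply a more explicit version of the paper's terse two-line proof.
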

\begin{proof}
This is again a direct consequence from the properties of Brownian local times in Lemma~\ref{lem:loc_dirac} and \ref{lem:propL}.
Note that in this setting the positivity is clear since  $\exp$ is a positive function.
\end{proof}
Next we define
 \begin{align}
 F^\sigma(s):= \int_{\bbR_+} L_s^a \, M^\sigma(da), \qquad s\geq 0.
 \end{align}

\begin{theorem} \label{thm:constrF.2}
 Let $\sigma\in(0,1)$ be fixed. Then  $\prob$-a.s.\ the following hold. 
\begin{enumerate}
 \item[(i)] There exists a set $\Lambda \subset \Omega'$ with $P_x[\Lambda]=1$ for all $x\in \bbR_+$ on which 
\begin{align} 
 F^\sigma=\lim_{r\uparrow \infty} \lim_{t\uparrow\infty} F^\sigma_{r,t}, \qquad \text{in $\sup$-norm on $[0,S]$,}
 \end{align}
 for any $S>0$. In particular, $F^\sigma$ is continuous, 	increasing and satisfies $F^\sigma(0)=0$ and $\lim_{s\to \infty} F^\sigma(s)=\infty$.
 \item[(ii)]  The functional $F^\sigma$ is  the (up to equivalence) unique PCAF of $B$ with Revuz measure $M^\sigma$. 
 \end{enumerate}
\end{theorem}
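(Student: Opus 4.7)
The proof proceeds in direct parallel to that of Theorem~\ref{thm:constrF}, with the McKean martingale $Y^\sigma_t$ and its limit $Y^\sigma$ taking over the role played by the derivative martingale $Z_t$ and its limit $Z$. For part (i), the plan is to fix an environment $\omega \in \Omega$ on which both Proposition~\ref{prop:FrtPCAF.2} holds and $M^\sigma_{r,t} \to M^\sigma$ vaguely on $\bbR_+$ (the latter being a consequence of Proposition~\ref{lem.Y}). On the set $\Lambda \subset \Omega'$ of full $P_x$-measure given by Lemma~\ref{lem:propL}, the map $a \mapsto L_s^a(\omega')$ is continuous with compact support $[0, \sup_{r\leq s} B_r(\omega')]$, so testing vague convergence against $f(a) = L_s^a(\omega')$ yields the pointwise convergence $F^\sigma_{r,t}(s) \to F^\sigma(s)$ on $[0,S]$. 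Since the functionals $F^\sigma_{r,t}$ are continuous and increasing by Proposition~\ref{prop:FrtPCAF.2} and the limit $F^\sigma$ is continuous, the standard Dini-type upgrade from pointwise to uniform convergence for monotone functions with continuous limit on a compact interval completes the $\sup$-norm convergence. The boundary properties $F^\sigma(0) = 0$ and $\lim_{s \to \infty} F^\sigma(s) = \infty$ follow from $L_0^a = 0$ and from $Y^\sigma > 0$ $\bbP$-a.s.\ combined with recurrence of $B$ on $\bbR_+$ (which ensures $L_s^a \to \infty$ for every $a \in \bbR_+$).

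For part (ii), since the empty set is the only polar set for $B$, the measure $M^\sigma$ trivially does not charge polar sets, so by \cite[Theorem~4.1.1]{CF12} the PCAF with Revuz measure $M^\sigma$ is unique up to equivalence. It therefore remains to verify that $F^\sigma$ itself is in Revuz correspondence with $M^\sigma$, i.e.\
\begin{align}
\int_{\bbR_+} f(a)\, M^\sigma(da) = \int_{\bbR_+} E_x\Bigl[\int_0^1 f(B_s)\, dF^\sigma(s)\Bigr] dx
\end{align}
for every continuous compactly supported $f: \bbR_+ \to [0,\infty)$, which suffices by a monotone class argument. Starting from the pre-limit identity
\begin{align}
E_x\Bigl[\int_0^1 f(B_s)\, dF^\sigma_{r,t}(s)\Bigr] = \int_{\bbR_+} f(a)\, E_x[L_1^a]\, M^\sigma_{r,t}(da),
\end{align}
I would pass to the limit first in $t$ and then in $r$, using vague convergence of $M^\sigma_{r,t}$ on the right-hand side (the integrand $a \mapsto f(a) E_x[L_1^a]$ being bounded and continuous by Lemmas~\ref{lem:tail_L} and~\ref{lem:propL}) together with a suitable uniform integrability estimate on the left-hand side, and conclude by integrating over $x$ and applying Fubini in combination with Lemma~\ref{lem:int_meanL}.

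The main technical step, which I expect to be the principal obstacle, is establishing the subcritical analogue of Lemma~\ref{lem:uiFrt}, namely the uniform $P_x$-integrability of $\{ \int_0^S f(B_s)\, dF^\sigma_{r,t}(s) \}_{t \geq r \geq r_0}$. Since $Y^\sigma_t \to Y^\sigma$ $\bbP$-a.s., for $\bbP$-a.e.\ $\omega$ there exists $r_0 = r_0(\omega)$ with $Y^\sigma_t \leq 2 Y^\sigma$ for all $t \geq r_0$; combining this with the pointwise bound
\begin{align}
\int_0^S f(B_s)\, dF^\sigma_{r,t}(s) \leq \|f\|_\infty\, Y^\sigma_t\, \sup_{a \in \bbR_+} L_S^a
\end{align}
and the tail estimate on $\sup_{a \in \bbR_+} L_S^a$ provided by Lemma~\ref{lem:tail_L} should be sufficient. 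In fact, the subcritical regime is somewhat cleaner than the critical one: the potentially sign-changing prefactor $(\sqrt{2}t - x_j(t))$ is absent, so $F^\sigma_{r,t} \geq 0$ manifestly and there is no need to invoke the companion result Lemma~\ref{lem:Z} to ensure the integrand stays non-negative for all large $t$.
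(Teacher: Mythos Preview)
Your proposal is correct and follows exactly the approach the paper indicates: the paper's own proof simply states that the result ``follows by similar arguments as in the proof of Theorem~\ref{thm:constrF} above,'' and you have accurately spelled out that parallel, including the observation (also noted in the proof of Proposition~\ref{prop:FrtPCAF.2}) that positivity is automatic in the subcritical case.
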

\begin{proof}
This follows by similar arguments as in the proof of Theorem \ref{thm:constrF} above. 
\end{proof}

Now we define the process $\cB^\sigma(s)\ldef B_{(F^\sigma)^{-1}(s)}$, $s\geq 0$. Similarly as explained in Section~\ref{sec:properties} above for $\cB$, by the general theory of time changes of Markov processes the process $\cB^\sigma$ is a recurrent, $M^\sigma$-symmetric pure jump diffusion on the support of $M^\sigma$ and its Dirichlet form can be abstractly described. For $0<r<t$ let
\begin{align}
  \cB_{r,t}^\sigma(s) \ldef B^\sigma_{F_{r,t}^{-1}(s)}, \qquad s\geq 0.
\end{align}
Then, from Theorem \ref{thm:constrF.2} we obtain as in the critical case the convergence of the associated process.
 
\begin{theorem} \label{thm:approx_proc.2}
$\prob$-a.s., for every $x\in \bbR_+$ we have  under $P_x$,
\begin{align}
 \lim_{r\uparrow \infty} \lim_{t \uparrow \infty} \cB^\sigma_{r,t} = \cB^\sigma
\end{align}
in distribution on $L^1_{\mathrm{loc}}$.
\end{theorem}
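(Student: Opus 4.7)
The plan is to adapt the proof of Theorem~\ref{thm:approx_proc} verbatim, replacing the critical PCAF $F$ and its approximants $F_{r,t}$ by their subcritical counterparts $F^\sigma$ and $F^\sigma_{r,t}$, since the argument uses only soft properties that are now supplied by Proposition~\ref{prop:FrtPCAF.2} and Theorem~\ref{thm:constrF.2}. Concretely, I fix an environment $\om \in \Om$ on which Theorem~\ref{thm:constrF.2} holds, so that for every $x \in \bbR_+$, $P_x$-a.s., $F^\sigma_{r,t} \to F^\sigma$ in $\sup$-norm on every compact $[0,S]$ as first $t \uparrow \infty$ and then $r \uparrow \infty$. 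By Lemma~\ref{lem:cadlag_paths}(i) the $M_1$-metric is dominated by the uniform metric, so this upgrades to convergence in $M_1$-topology on $D([0,\infty), \bbR_+)$.

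Next, observe that Proposition~\ref{prop:FrtPCAF.2}(ii) together with Theorem~\ref{thm:constrF.2}(i) guarantees $F^\sigma_{r,t}, F^\sigma \in D_{\uparrow}([0,\infty),\bbR_+)$ with $F^\sigma$ continuous and $F^\sigma(s) \to \infty$ as $s \to \infty$, so the hypotheses of Lemma~\ref{lem:cadlag_paths}(ii)--(iii) are satisfied. For a bounded continuous $f$ on $L^1_{\mathrm{loc}}$ write
\begin{align}
E_x\bigl[f(\cB^\sigma_{r,t})\bigr] - E_x\bigl[f(\cB^\sigma)\bigr]
= E_x\bigl[f \circ \pi(F^\sigma_{r,t}, B)\bigr] - E_x\bigl[f \circ \pi(F^\sigma, B)\bigr],
\end{align}
where $\pi \colon (a, w) \mapsto w \circ a^{-1}$ maps $D_\uparrow([0,\infty),\bbR_+) \times D([0,\infty),\bbR_+)$ into $L^1_{\mathrm{loc}}$. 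Since $B$ has continuous paths $P_x$-a.s., Lemma~\ref{lem:cadlag_paths}(ii)--(iii) yields that $\pi$ is $(M_1 \times J_1)$-continuous at $(F^\sigma, B)$ on a set of full $P_x$-measure.

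Combining these two ingredients with the bounded convergence theorem gives
\begin{align}
\lim_{r \uparrow \infty} \lim_{t \uparrow \infty} E_x\bigl[f(\cB^\sigma_{r,t})\bigr] = E_x\bigl[f(\cB^\sigma)\bigr],
\end{align}
which is exactly the claim. There is no genuine obstacle here: the entire argument is structurally identical to the one given for Theorem~\ref{thm:approx_proc}, and the only check is that the subcritical functionals $F^\sigma_{r,t}, F^\sigma$ enjoy the same monotonicity, continuity, and divergence properties required to invoke Lemma~\ref{lem:cadlag_paths}. The serious analytic work, namely establishing the uniform-on-compacts convergence $F^\sigma_{r,t} \to F^\sigma$, has already been accomplished in Theorem~\ref{thm:constrF.2}, whose proof in turn relies on the vague convergence of $M^\sigma_{r,t}$ to $M^\sigma$ provided by Proposition~\ref{lem.Y}.
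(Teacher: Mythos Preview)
Your proposal is correct and follows essentially the same approach as the paper: the paper's own proof of this theorem consists of the single sentence ``This can be shown by the same arguments as Theorem~\ref{thm:approx_proc},'' and what you have written is precisely a spelled-out version of that argument with the subcritical functionals $F^\sigma_{r,t}, F^\sigma$ substituted in and the requisite properties supplied by Proposition~\ref{prop:FrtPCAF.2} and Theorem~\ref{thm:constrF.2}.
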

\begin{proof}
This can be shown by the same arguments as Theorem \ref{thm:approx_proc}.
\end{proof}
Similarly as discussed for the critical case in Theorem~\ref{thm:rw_approx} above, an approximation of $\cB^\sigma$ in terms of a random walk on a lattice is also possible.

\appendix

\section{Brownian local times}\label{section.lt}
In this section we consider Brownian local times as an example for a PCAF on the Wiener space and recall some of their properties needed in the present paper. 
Let $(\Omega',\cG, (\cG_t)_{t\geq 0}, (P_x)_{x\in \bbR})$ be the Wiener space as introduced in Section~\ref{sec:prelim} with coordinate process $W$, so that $B\ldef |W|$ becomes a reflected Brownian motion on $\bbR_+$ with a field of local times denoted by $\{ L_t^a, t\geq 0, a\in\bbR_+\}$. 
\begin{lemma} \label{lem:propL}
There exists a set $\Lambda \subset \Omega'$ with $P_x[\Lambda]=1$ for all $x\in \bbR_+$ such that for all $\om'\in \Lambda$ the following hold.
\begin{enumerate}
 \item [(i)] For every $a\in \bbR_+$ the mapping $t \mapsto L_t^a$ is continuous, increasing and satisfies $L_0^a(\om')=0$ and $\lim_{t\to \infty} L^a_t(\om')=\infty$. The measure $dL_t^ a(\om')$ is carried by the set $\{ t\geq 0: B_t(\om)=a\}$.
 \item[(ii)] The mapping $(a,t)\mapsto L_t^a(\om')$ is jointly continuous and for every $\alpha<1/2$ and $T>0$ there exists $C_1=C_1(\om',\alpha, T)$ satisfying $\sup_{x\in \bbR_+} E_x[C_1]<\infty$ such that
\begin{align} \label{eq:cont_loctimes}
 \sup_{t\leq T} \big| L_t^a(\om') - L_t^ b(\om') \big|\leq C_1 \, |a-b|^ \alpha.
\end{align}
\end{enumerate}
\end{lemma}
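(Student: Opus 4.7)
My plan is to reduce everything to well-known properties of local times for the standard one-dimensional Brownian motion $W$ on the line, and then transfer to the reflected Brownian motion $B = |W|$ via the relation $L_t^a(B) = L_t^a(W) + L_t^{-a}(W)$ from \eqref{eq:def_L}. For (i), the continuity, monotonicity, the initial condition $L_0^a = 0$, and the support property $\supp(dL^a) \subseteq \{t : B_t = a\}$ are all classical consequences of Tanaka's formula applied to $W$ (passing to $B$ through \eqref{eq:def_L}). The divergence $L_t^a \to \infty$ as $t \to \infty$ follows from the point-recurrence of reflected Brownian motion at every $a \in \bbR_+$: under $P_x$, the process $B$ visits every level infinitely often, and on every excursion away from $a$ the local time increases by a strictly positive amount (by the strong Markov property and excursion theory for $B$).

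The substantial content is (ii), for which I would rely on the Tanaka identity
\begin{equation*}
L_t^a(W) = 2(W_t - a)^+ - 2(W_0 - a)^+ - 2\int_0^t \indicator_{\{W_s > a\}}\, dW_s.
\end{equation*}
Subtracting the analogous identity at level $b$ and applying the Burkholder--Davis--Gundy inequality to the resulting stochastic integral (whose quadratic variation is $4\int_0^t \indicator_{\{a \wedge b < W_s \leq a \vee b\}} ds$, which by the occupation-density formula equals $4\int_{a\wedge b}^{a\vee b} L_t^c(W)\, dc$), one obtains for every $p \geq 2$ and $T > 0$ a bound of the form
\begin{equation*}
E_x\Bigl[\sup_{s \leq T} \bigl|L_s^a(W) - L_s^b(W)\bigr|^p\Bigr] \leq C_{p,T}\, |a-b|^{p/2},
\end{equation*}
where crucially $C_{p,T}$ is independent of $x$ by translation invariance of $W$ (under $P_x$, the law of $L_t^{\bullet}(W)$ is the $x$-translate of the law under $P_0$, so the increment $L_s^a - L_s^b$ has the same law as under $P_0$ with shifted levels).

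Using \eqref{eq:def_L}, exactly the same bound holds for the local times of $B$ with a constant independent of $x \in \bbR_+$. I would then apply Kolmogorov's continuity criterion to the two-parameter field $(a,t) \mapsto L_t^a$: combining the temporal continuity from (i) with the moment bound on spatial increments (uniformly in $x$) yields a jointly continuous modification, together with the existence for each $\alpha < 1/2$ of a random variable $C_1 = C_1(\om', \alpha, T)$ satisfying \eqref{eq:cont_loctimes}, whose $P_x$-expectation is bounded by a universal constant (depending only on $\alpha$ and $T$). The defining set $\Lambda$ is the intersection of the full-measure sets on which these properties hold for a countable dense set of levels; continuity then extends them to all $a \in \bbR_+$.

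The main technical obstacle is the uniformity $\sup_x E_x[C_1] < \infty$: one must extract from Kolmogorov's theorem a Hölder constant whose moments are controlled by the moments of spatial increments only (independent of starting point), and verify that the translation-invariance argument survives the passage to $B = |W|$ (which it does, since the reflection just adds a second local-time term with the same uniform bound). Everything else in the lemma is essentially bookkeeping built on top of classical Tanaka-type estimates.
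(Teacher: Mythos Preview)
Your proposal is correct and follows essentially the same route as the paper: reduce to the standard Brownian motion $W$ via \eqref{eq:def_L} and invoke the classical properties of $L(W)$. The paper simply cites Revuz--Yor (Chapter~VI, in particular Theorem~VI.1.7 and Corollaries~VI.1.8 and VI.2.4) for those properties, whereas you have unpacked the underlying Tanaka--BDG--Kolmogorov argument that appears there; your observation that translation invariance of $W$ yields the uniformity $\sup_x E_x[C_1]<\infty$ is the right way to see that point, which the paper leaves implicit in the citation.
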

\begin{proof}
 These properties are immediate from \eqref{eq:def_L} since the Brownian local time $L(W)$ satisfies them. We refer to \cite[Chapter VI]{RY99} for details, in particular \cite[Corollary VI.2.4]{RY99} for (i)  and \cite[Theorem VI.1.7 and Corollary VI.1.8]{RY99}) for (ii)  (cf.\  also \cite[Example 5.1.1]{FOT11}).
\end{proof}

\begin{lemma} \label{lem:tail_L}
 For any $t>0$  there exists $\lambda_0=\lambda_0(t)>0$ and a positive constant $C_2$ such that  
 \begin{align} \label{eq:tail_supL}
  P_x\Big[\sup_{a \in \bbR_+} L_t^a > \lambda \Big]\leq C_2 \, \frac{\lambda}{\sqrt{t}} e^{-\lambda^2/2t}, \qquad \forall x\in \bbR_+,\ \lambda \geq \lambda_0.
 \end{align}
 In particular, $\sup_{a \in \bbR_+} L_t^a \in L^2(P_x)$ for any $x\in \bbR_+$.
%
\end{lemma}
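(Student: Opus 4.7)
The plan is to reduce the reflected case to the two-sided case, exploit scaling and translation invariance, and then appeal to a classical tail bound for the maximal local time of standard Brownian motion.

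First, using the identity $L_t^a = L_t^a(W) + L_t^{-a}(W)$ from \eqref{eq:def_L}, we have
\begin{align*}
 \sup_{a\in\bbR_+} L_t^a \;\le\; 2\sup_{y\in\bbR} L_t^y(W),
\end{align*}
so it suffices to establish the analogous tail bound for $\sup_{y\in\bbR} L_t^y(W)$ under $P_x$. By translation invariance of Brownian motion, the field $(L_t^y(W))_{y\in\bbR}$ under $P_x$ has the same law as $(L_t^{y+x}(W))_{y\in\bbR}$ under $P_0$, so the distribution of $\sup_{y\in\bbR} L_t^y(W)$ does not depend on the starting point $x$. By Brownian scaling, $(L_t^y(W))_{y\in\bbR}$ under $P_0$ is distributed as $\sqrt{t}\,(L_1^{y/\sqrt{t}}(W))_{y\in\bbR}$ under $P_0$, and so it suffices to prove that for some constant $C>0$ and $\mu_0>0$,
\begin{align*}
 P_0\!\left[\sup_{y\in\bbR} L_1^y(W)>\mu\right] \;\le\; C\,\mu\,e^{-\mu^2/2}, \qquad \mu\ge\mu_0. \tag{$\star$}
\end{align*}
The bound in \eqref{eq:tail_supL} then follows by taking $\mu=\lambda/(2\sqrt{t})$ and adjusting constants.

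To prove $(\star)$, the natural tool is the second Ray--Knight theorem. For $b>0$ and $T_b=\inf\{s\ge 0:W_s=b\}$, under $P_0$ the process $a\mapsto L_{T_b}^{b-a}$, $0\le a\le b$, is a squared Bessel process of dimension $2$ started at $0$. Combined with the strong Markov property at $T_b$ and time reversal at $T_{m_1}$ on the left side of the origin, one obtains that on the event $\{M_1\le R,\,-m_1\le R\}$ the field $(L_1^y(W))_{y\in[-R,R]}$ is dominated by suprema of BESQ$(2)$-processes run up to time $2R$, whose tails are $\chi^2$-like. Combining this with the Gaussian tail $P_0[M_1\vee(-m_1)>R]\le C e^{-R^2/2}$ and optimising in $R$ yields $(\star)$ with the sharp constant $1/2$ in the exponent.

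The main obstacle is obtaining the sharp Gaussian decay rate $1/2$; this is what forces one through Ray--Knight rather than relying solely on the moment estimates of Barlow--Yor, which only give a weaker (but still Gaussian) tail. Once $(\star)$ is established, the $L^2$ integrability is immediate from
\begin{align*}
 E_x\!\left[\bigl(\sup_{a\in\bbR_+} L_t^a\bigr)^2\right]
 \;=\; 2\int_0^\infty \lambda\, P_x\!\left[\sup_{a\in\bbR_+} L_t^a>\lambda\right]\,d\lambda,
\end{align*}
since after bounding the tail by a constant on $[0,\lambda_0]$ and by \eqref{eq:tail_supL} on $[\lambda_0,\infty)$, the integral converges thanks to the exponential factor.
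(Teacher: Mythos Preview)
Your reduction to standard Brownian motion via \eqref{eq:def_L}, the observation that $\sup_y L_t^y(W)$ has a law independent of the starting point, and the scaling to $t=1$ all match the paper's approach exactly. The paper then simply \emph{cites} the tail bound $(\star)$ for $\sup_y L_1^y(W)$ from the literature (Cs\'aki 1989), and deduces the $L^2$-integrability by integration just as you do.

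The place where your proposal diverges---and where it has a genuine gap---is the Ray--Knight sketch for $(\star)$. Bounding $L_1^y$ on the event $\{M_1\le R,\ -m_1\le R\}$ by $L_{T_R}^y$ and invoking the first Ray--Knight theorem gives a domination by a $\mathrm{BESQ}(2)$ process on $[0,R]$, whose supremum has an \emph{exponential} tail of order $e^{-c\mu/R}$. Balancing this against the Gaussian tail $e^{-R^2/2}$ of the running maximum, the optimal choice is $R\asymp\mu^{1/3}$, and the resulting bound is only of order $e^{-c\mu^{2/3}}$, a stretched exponential. This does \emph{not} yield $(\star)$ with the Gaussian exponent $\mu^2/2$; your claim that ``optimising in $R$ yields $(\star)$ with the sharp constant $1/2$'' is incorrect. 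The sharp Gaussian tail for $L_1^*$ requires a finer argument (e.g.\ the one in Cs\'aki/Cs\"org\H{o} or Kesten), which exploits that each individual $L_1^a$ already has a Gaussian tail of the right order and then controls the supremum via the modulus of continuity of $a\mapsto L_1^a$, rather than the wasteful domination $L_1\le L_{T_R}$.

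A smaller point: your factor of $2$ in $\sup_{a\ge 0}L_t^a\le 2\sup_{y\in\bbR}L_t^y(W)$, together with $\mu=\lambda/(2\sqrt t)$, turns the exponent into $-\lambda^2/(8t)$, not $-\lambda^2/(2t)$; ``adjusting constants'' cannot repair the constant inside the exponential. This is harmless for the only use made of the lemma in the paper (namely $L^2$-integrability), but it does not recover the statement as written.
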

\begin{proof}
 In view of \eqref{eq:def_L} it suffices to consider the local times $L_t^a(W)$ of the standard Brownian motion $W$. 
  Note that the event $\big\{ \sup_{a \in \bbR} L_t^a(W) > \lambda \big\}$ does not depend on the starting point of $W$. Under $P_0$ the tail estimate in \eqref{eq:tail_supL} for $\sup_{a \in \bbR} L_t^a(W)$ has been shown in \cite[Lemma~1]{Cs89}.
 The fact that $\sup_{a \in \bbR_+} L_t^a \in L^2(P_x)$ follows from \eqref{eq:tail_supL} by integration.
\end{proof}

Recall that in dimension one only the empty set is polar for $W$ or $B$, so trivially any $\sigma$-finite measure $ \mu$ on $\bbR$ does not charge polar sets and by general theory (see e.g.\ \cite[Theorem 4.1.1]{CF12}) there exist unique (up to equivalence) PCAF $A$ of $W$ or $B$ with $\mu_A=\mu$. In particular, for any $a\in \bbR$  the unique PCAF of $W$ having the Dirac measure $\delta_a$ as Revuz measure is given by $L^a(W)$, see \cite[Example 5.1.1]{FOT11} or \cite[Proposition X.2.4]{RY99}. This can be easily transferred to the reflected Brownian motion.

\begin{lemma} \label{lem:loc_dirac}
 For any $a\in \bbR_+$, the local time $L^ a$ is the PCAF of $B$ with Revuz measure $\delta_a$. 
\end{lemma}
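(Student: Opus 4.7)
The plan is to establish the two defining properties of a PCAF and its Revuz measure. That $L^a$ is a PCAF of $B$ will be immediate from Lemma~\ref{lem:propL}(i): continuity, adaptedness, $L^a_0 = 0$, and the additivity $L^a_{t+s} = L^a_t + L^a_s \circ \theta_t$ are all inherited from the corresponding properties of the Brownian local times via the identity $L^a_t = L^a_t(W) + L^{-a}_t(W)$ in \eqref{eq:def_L}. So the real work lies in identifying the Revuz measure.

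To that end, I would invoke the simplified characterization \eqref{eq:revuz_invariant}, which applies here because the Lebesgue measure on $\bbR_+$ is invariant (indeed reversible) for reflected Brownian motion. Since $dL^a_s$ is carried by $\{s \geq 0 : B_s = a\}$ (again Lemma~\ref{lem:propL}(i)), we may replace $f(B_s)$ by $f(a)$ under the integral, so for any non-negative Borel $f: \bbR_+ \to [0,\infty]$,
\begin{align}
\int_{\bbR_+} E_x\!\left[\int_0^1 f(B_s)\, dL^a_s\right] dx \;=\; f(a) \int_{\bbR_+} E_x[L^a_1] \, dx.
\end{align}
It therefore suffices to verify that $\int_{\bbR_+} E_x[L^a_1]\, dx = 1$, for then the right-hand side equals $f(a) = \int_{\bbR_+} f(y)\,\delta_a(dy)$, identifying $\delta_a$ as the Revuz measure.

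The key computation will use \eqref{eq:def_L} together with the classical mean occupation formula $E_x[L^b_1(W)] = \int_0^1 p_s(x,b)\, ds$ for the Wiener local time, where $p_s$ is the Gaussian density. By Fubini,
\begin{align}
\int_{\bbR_+} E_x[L^a_1]\, dx \;=\; \int_0^1 \left(\int_{\bbR_+} p_s(x,a)\, dx + \int_{\bbR_+} p_s(x,-a)\, dx\right) ds,
\end{align}
and the bracketed sum equals $\int_{\bbR} p_s(x,a)\, dx = 1$ for every $s>0$ (for $a=0$ this degenerates to $2\cdot\frac{1}{2}=1$), giving the claim. Uniqueness is then automatic: since only the empty set is polar for $B$, general theory (e.g., \cite[Theorem~4.1.1]{CF12}) ensures that $L^a$ is the unique PCAF of $B$ with Revuz measure $\delta_a$ up to equivalence.

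There is no genuine obstacle here; the argument is a transparent transfer from the well-known case of one-dimensional Brownian motion (cf.\ \cite[Example~5.1.1]{FOT11}) via the folding identity \eqref{eq:def_L}. The only mild care required is treating the boundary case $a=0$, where $L^0(B) = 2\,L^0(W)$, but this fits seamlessly into the same calculation.
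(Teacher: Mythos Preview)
Your proof is correct and close in spirit to the paper's, but the routes diverge at one point. You invoke the simplified invariant-measure characterization \eqref{eq:revuz_invariant} and reduce the identification to the identity $\int_{\bbR_+} E_x[L^a_1]\,dx = 1$, which you compute via the Gaussian transition density and Fubini. The paper instead works with the general limit definition \eqref{eq:rev_corr_gen}: it extends $f$ symmetrically to $\tilde f$ on $\bbR$, uses that $L^a(W)$ already has Revuz measure $\delta_a$ for the two-sided Brownian motion, and then splits $\int_{\bbR}$ into $\int_{\bbR_+}$ and $\int_{-\infty}^0$, converting the latter via the reflection identity $L^a(-W)=L^{-a}(W)$ so that the two pieces recombine into \eqref{eq:def_L}. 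Your integral identity is in fact exactly the content of the paper's separate Lemma~\ref{lem:int_meanL} (proved there via $E_x[L^a_1(W)]=E_a[L^x_1(W)]$ and the occupation-times formula rather than the heat kernel). So in effect you have merged that lemma into the present proof and used the invariant-measure shortcut, which is a slight economy; the paper's folding argument has the mild advantage of deducing the result directly from the classical $W$ case without recomputing anything.
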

\begin{proof}
 We need to show that for any for any non-negative Borel function $f$ on $\bbR_+$,
 \begin{align} \label{eq:rev_dirac}
  f(a)=\lim_{t\downarrow 0} \frac 1 t \int_{\bbR_+} E_x \Bigl[ \int_0^t f(B_s) \, dL^a_s \Bigr] \, dx.
 \end{align}
We extend $f$ to a function $\tilde f$ on $\bbR$ by setting $\tilde f (x) \ldef f(|x|)$, $x\in \bbR$. Using that $L^a(W)$ is the unique PCAF of $W$ with $\mu_{L^a(W)}=\delta_a$ and that for any $x\in \bbR$ the measure $dL^a(W)$ is $P_x$- a.s.\ carried by the set $\{t: W_t=a\}$  we have
\begin{align} \label{eq:calc_fa}
& f(a)=\tilde f(a)=\lim_{t\downarrow 0} \frac 1 t \int_{\bbR} E_x \Bigl[ \int_0^t \tilde f(W_s) \, dL^a_s(W) \Bigr] \, dx  \\
&=\lim_{t\downarrow 0} \frac 1 t \int_{\bbR_+} \! E_x \Bigl[ \int_0^t f(B_s) \, dL^a_s(W) \Bigr] \, dx + \lim_{t\downarrow 0} \frac 1 t \int_{-\infty}^0 \! E_x \Bigl[ \int_0^t \tilde f(W_s) \, dL^a_s(W) \Bigr] \, dx. \nonumber
\end{align}
Since $L^a(-W)=L^{-a}(W)$ (cf.\ \cite[Exercise VI.1.17]{RY99}) we get
\begin{align}
 \int_{-\infty}^0 \! E_x \Bigl[ \int_0^t \tilde f(W_s) \, dL^a_s(W) \Bigr] \, dx&= \int_{-\infty}^0 \! E_{-x}  \Bigl[ \int_0^t \tilde f(-W_s) \, dL^a_s(-W) \Bigr] \, dx \nonumber\\
 &=\int_{\bbR_+} \! E_x \Bigl[ \int_0^t  f(B_s) \, dL^{-a}_s(W) \Bigr] \, dx
\end{align}
and combining this with \eqref{eq:calc_fa} and \eqref{eq:def_L} we obtain \eqref{eq:rev_dirac}.
\end{proof}
\begin{lemma}\label{lem:int_meanL}
 For any $a\in \bbR_+$, 
 \begin{align}
 \int_{\bbR_+} E_x[L_1^a] \, dx=1.
 \end{align}
\end{lemma}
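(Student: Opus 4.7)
The plan is a one-line invocation of the Revuz correspondence established in the previous lemma. By Lemma~\ref{lem:loc_dirac}, $L^a$ is the (unique up to equivalence) PCAF of the reflected Brownian motion $B$ with Revuz measure $\delta_a$. Since the Lebesgue measure $dx$ on $\bbR_+$ is invariant for $B$, the characterising identity for the Revuz measure takes the simpler form \eqref{eq:revuz_invariant}, namely
\begin{align*}
\int_{\bbR_+} f(y) \, \delta_a(dy) = \int_{\bbR_+} E_x\!\left[ \int_0^1 f(B_s) \, dL_s^a \right] dx
\end{align*}
for every non-negative Borel function $f$ on $\bbR_+$.

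I would then simply choose the constant test function $f\equiv 1$. The left-hand side becomes $\delta_a(\bbR_+)=1$, while the right-hand side equals $\int_{\bbR_+} E_x[L_1^a - L_0^a] \, dx = \int_{\bbR_+} E_x[L_1^a] \, dx$ by Lemma~\ref{lem:propL}(i). This yields the claim directly.

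As a sanity check, one can also argue by an explicit computation: the occupation times formula gives $E_x[L_1^a] = \int_0^1 p_s^{\mathrm{ref}}(x,a) \, ds$, where $p_s^{\mathrm{ref}}(x,a)=p_s(x,a)+p_s(x,-a)$ is the transition density of reflected Brownian motion on $\bbR_+$. Fubini's theorem and the elementary identity $\int_{\bbR_+}\bigl(p_s(x,a)+p_s(x,-a)\bigr) dx = \bbP[\mathcal N \ge -a/\sqrt s] + \bbP[\mathcal N \ge a/\sqrt s] = 1$ (with $\mathcal N$ a standard normal) again reduce the integral to $\int_0^1 1 \, ds=1$. I do not anticipate any real obstacle: the entire content of the lemma is already encoded in Lemma~\ref{lem:loc_dirac} together with the invariant form \eqref{eq:revuz_invariant} of the Revuz correspondence.
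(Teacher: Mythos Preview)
Your primary argument is correct and is not the route the paper takes. The paper argues by a direct hands-on computation: it writes $E_x[L_1^a]=E_x[L_1^a(W)]+E_x[L_1^{-a}(W)]$, invokes the symmetry $E_x[L_1^{\pm a}(W)]=E_a[L_1^{\pm x}(W)]$ between starting point and level, and then applies the occupation times formula to get
\[
\int_{\bbR_+}E_x[L_1^a]\,dx=E_a\Big[\int_{-\infty}^{\infty}L_1^x(W)\,dx\Big]=1.
\]
Your approach is more conceptual: you simply read off the identity from the invariant form \eqref{eq:revuz_invariant} of the Revuz correspondence, using that Lemma~\ref{lem:loc_dirac} has already identified $\delta_a$ as the Revuz measure of $L^a$. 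This is perfectly legitimate and arguably cleaner, since it shows the lemma is a tautological consequence of what was just proven; the paper's computation, on the other hand, is self-contained and does not rely on the abstract Revuz machinery. Your sanity-check via the reflected heat kernel is also fine and is essentially a third variant.
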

\begin{proof}
 Recall that
 \begin{align}
  E_x \big[ L_1^a \big]=  E_x \big[ L_1^a(W) \big]+  E_x \big[ L_1^{-a}(W) \big]=  E_a \big[ L_1^x(W) \big] +  E_a \big[ L_1^{-x}(W) \big].
 \end{align}
Hence, by the occupation times formula we obtain
\begin{align}
 \int_{\bbR_+} E_x[L_1^a] \, dx= E_a \Big[ \int_{-\infty}^\infty L_1^x(W) \, dx \Big]=1
\end{align}
(cf.\ \cite[proof of Proposition~X.2.4]{RY99}).
\end{proof}

%

\bibliographystyle{abbrv}
\bibliography{literature}

\end{document}